\numberwithin{equation}{section}
\theoremstyle{plain}
\newtheorem*{acknowledgements}{Acknowledgements}
\numberwithin{equation}{section}
\newtheorem{thm}{Theorem}[section]
\newtheorem{prop}[thm]{Proposition}
\newtheorem{lemma}[thm]{Lemma}
\newtheorem{cor}[thm]{Corollary}
\theoremstyle{remark}
\newtheorem{remark}[thm]{Remark}
\theoremstyle{definition}
\newtheorem*{notation}{Notation}
\providecommand{\abs}[1]{\left\lvert#1\right\rvert}
\providecommand{\ip}[1]{\langle#1\rangle}
\providecommand{\iFT}[1]{\mathcal{F}^{-1}\left(#1\right)}
\renewcommand{\div}{{\rm div}}
\newcommand{\curl}{{\rm curl}} 
\newcommand{\C}{\mathbb{C}}  
\newcommand{\N}{\mathbb{N}} 
\newcommand{\R}{\mathbb{R}}
\newcommand{\PP}{\mathbb{P}}
\title{Convergence to Stratified Flow for an Inviscid 3D Boussinesq System}
\author{Klaus Widmayer}
\subjclass[2010]{76B15, 76B70, 35Q35}
\date{\today}
\address{Courant Institute of Mathematical Sciences, 251 Mercer Street, New York 10012 NY, USA}
\email{klaus@cims.nyu.edu}
\begin{document}

\begin{abstract}
We study the stability of special, stratified solutions of a 3d Boussinesq system describing an incompressible, inviscid 3d fluid with variable density (or temperature, depending on the context) under the effect of a uni-directional gravitational force. The behavior is shown to depend on the properties of an anisotropic dispersive operator with weak decay in time. However, the dispersive decay also depends on the strength of the gravity in the system and on the profile of the stratified solution, whose stability we study. We show that as the strength of the dispersion in the system tends to infinity, the 3d system of equations tends to a stratified system of 2d Euler equations with stratified density.
\end{abstract}

\maketitle

\tableofcontents

\section{Introduction}
We study an incompressible, inviscid fluid $u:\R^+\!\!\times\R^3\rightarrow\R^3$ of variable density or temperature (depending on the physical context) $\theta:\R^+\!\!\times\R^3\rightarrow\R$ under the influence of an external gravity force proportional to $\theta$ and acting in the third coordinate direction $\vec{e}_3=\left(\begin{array}{c} 0\\0\\1\end{array}\right)$ of $\R^3$, described by Euler's equation coupled to a continuity equation and an equation of state (see e.g.\ \cite{MR1965452}, \cite{MR1925398}):
\begin{equation}\label{eq:BQ0}
\begin{cases}
 &\partial_t u+u\cdot\nabla u+\nabla p=\nu \Delta u+\kappa^2 \theta\vec{e}_3
,\\
 &\partial_t \theta + u\cdot\nabla \theta=\mu\Delta \theta,\\
 &\div \,u=0.
\end{cases}
\end{equation}
Here $p:\R^+\!\!\times\R^3\rightarrow\R$ is the fluid pressure, $\kappa>0$ is a gravitational constant and $\nu,\mu>0$ are viscosity parameters.

There is too vast a number of results regarding various aspects of this problem to be surveyed here. We just point out that the question of global well-posedness of this system remains a major open problem, so most of the work so far has focused on various improvements over the standard local well-posedness theory (such as regularity criteria in various settings \cite{RegCritBQ}, \cite{MR2514915}, or blow-up conditions \cite{MR2653751}, to name but a few) or reductions to a two-dimensional problem. For the latter, a much more comprehensive understanding is available, including the construction of global solutions when $\nu\neq$ or $\mu\neq0$ (\cite{MR2227730}, \cite{MR2121245},\cite{MR2290277}, \cite{MR2782720}).

However, in the inviscid case ($\mu=\nu=0$), which will be the focus of this note, the question of global regularity is open, even for the two-dimensional problem, and only local results regarding stability, well-posedness in various function spaces and blow-up criteria (e.g.\ \cite{SQGpaper}, \cite{MR2645152}, \cite{MR3351981}) are known.

In the present article we investigate the behavior near a special stratified solution: We perturb \eqref{eq:BQ0} around $u=0$, $\theta=\lambda^2 z$ ($\lambda>0$) and obtain thus the system
\begin{equation}\label{eq:BQ_gen}
\begin{cases}
 &\partial_t u+u\cdot\nabla u+\nabla p=\nu \Delta u+\kappa^2 \theta\vec{e}_3,\\
 &\partial_t \theta + u\cdot\nabla \theta=\mu\Delta \theta-\lambda^2 u_3,\\
 &\div \,u=0.
\end{cases}
\end{equation}

\subsection{Inviscid Flow and Rescaling}
In order to study the dispersive effects in this system we focus now on the case of vanishing viscosity, i.e.\ $\nu=\mu=0$. It is natural\footnote{see Section \ref{sec:en_est}} for the energy estimates to rescale $\theta\mapsto T:=\frac{\kappa}{\lambda}\theta$ in \eqref{eq:BQ_gen}, which then becomes
\begin{equation}\label{eq:BQ_1}
\begin{cases}
 &\partial_t u+u\cdot\nabla u+\nabla p=\sigma T\vec{e}_3,\\
 &\partial_t T + u\cdot\nabla T=-\sigma u_3,\\
 &\div \,u=0,
\end{cases}
\end{equation}
with the new parameter $\sigma:=\kappa\lambda>0$.

As will be shown later, this system incorporates dispersive effects, the strength of which depends on the ``dispersion parameter'' $\sigma$.

\subsection{Main Theorem}
We are interested here in the question of the dynamics of \eqref{eq:BQ_1} as the dispersion gets increasingly strong. More precisely, we study the limit $\sigma\to\infty$. We will prove:
\begin{thm}\label{thm:main}
Consider a solution $(u^\sigma,T^\sigma)\in C([0,L], H^N)$ on a time interval $[0,L]$ to the initial value problem for \eqref{eq:BQ_1},
\begin{equation}\label{eq:BQ_2}
\begin{cases}
 &\partial_t u^\sigma+u^\sigma\cdot\nabla u^\sigma+\nabla p^\sigma=\sigma T^\sigma\vec{e}_3,\\
 &\partial_t T^\sigma + u^\sigma\cdot\nabla T^\sigma=-\sigma u^\sigma_3,\\
 &\div \,u^\sigma=0,\\
 &(u^\sigma,T^\sigma)(0)=(u_0,T_0),
\end{cases}
\end{equation}
satisfying 
\begin{equation}\label{eq:unif_energy}
\abs{(u^\sigma,T^\sigma)(t)}_{H^N}<\infty\text{ for }t\in [0,L] \text{ \emph{uniformly} in }\sigma,
\end{equation}
for some $N\geq 6$. Assume also that $\abs{(u_0,T_0)}_{W^{5,1}}<\infty$.

Then the solution $(u^\sigma,T^\sigma):\R^+\times\R^3\to\R^4$ can be decomposed into two pieces 
\begin{equation*}
 \left(\begin{array}{c}u^\sigma_1\\ u^\sigma_2\\ u^\sigma_3\\ T^\sigma\end{array}\right)=\left(\begin{array}{c}v^\sigma_1\\ v^\sigma_2\\ 0\\ 0\end{array}\right)+\left(\begin{array}{c}w^\sigma_1\\ w^\sigma_2\\ w^\sigma_3\\ T^\sigma\end{array}\right)
\end{equation*}
with the property that as $\sigma\to\infty$, for any $t\in (0,L]$ we have the convergences
$$(w^\sigma_1,w^\sigma_2,w^\sigma_3,T^\sigma)(t)\to 0 \text{ in } W^{1,\infty}(\R^3)$$
and
$$(v^\sigma_1,v^\sigma_2)(t)\to(\bar{u}_1,\bar{u}_2)(t) \text{ in } L^2(\R^3).$$ 
Here $\bar{u}:=(\bar{u}_1,\bar{u}_2):\R^+\times\R^3\to\R^2$ solves the system of two-dimensional\footnote{As described in Section \ref{sec:not}, the lower index $h$ denotes operation in only the ``horizontal'' variables $x_1,x_2$.} incompressible Euler equations
\begin{equation}\label{eq:2DEuler}
 \begin{cases}&\partial_t\bar{u}+\bar{u}\cdot\nabla_h\bar{u}+\nabla_h\bar{p}=0,\\ &\div_h(\bar{u})=0,\\ &\bar{u}(0)=(\bar{\PP}_0(u_0,T_0))_h,\end{cases}
\end{equation}
where $\bar{\PP}_0$ projects onto functions such that $\div_h(\bar{u})=0$.\footnote{For the explicit description of this projection see Lemma \ref{lem:eigen} and Section \ref{sec:reform}.}
\end{thm}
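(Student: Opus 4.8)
The plan is to treat \eqref{eq:BQ_2} as a fast singular limit and to separate the flow into a non-oscillating (``vortical'') component and a dispersing (``wave'') component, controlling the two by complementary mechanisms. First I would rewrite the system in the compact form
\[ \partial_t U^\sigma+\sigma\,\mathcal{L}\,U^\sigma=\mathcal{Q}(U^\sigma,U^\sigma),\qquad U^\sigma:=(u^\sigma,T^\sigma), \]
where $\mathcal{L}(u,T):=\big(-\PP(T\vec{e}_3),\,u_3\big)$ is the skew-adjoint linear part (its skew-adjointness in each $H^s$ is the content of Section \ref{sec:en_est}) and $\mathcal{Q}(U,U):=\big(-\PP(u\cdot\nabla u),\,-u\cdot\nabla T\big)$. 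Diagonalizing $\mathcal{L}$ in Fourier---this is Lemma \ref{lem:eigen}---gives eigenvalues $0$ and $\pm i\,\abs{\xi_h}/\abs{\xi}$, with the zero-eigenspace consisting exactly of horizontally divergence-free, purely horizontal, temperature-free fields; the associated spectral projection is $\bar{\PP}_0$. I would then \emph{define} the splitting of the statement by $v^\sigma:=\bar{\PP}_0U^\sigma=(v_1^\sigma,v_2^\sigma,0,0)$ and $(w^\sigma,T^\sigma):=(I-\bar{\PP}_0)U^\sigma$. Since $\bar{\PP}_0$ and $I-\bar{\PP}_0$ are spectral projections of the skew-adjoint $\mathcal{L}$, they commute with the fast semigroup $e^{-\sigma t\mathcal{L}}$ and are orthogonal in every $H^s$; in particular \eqref{eq:unif_energy} is inherited by each piece.

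For the wave part I would run a dispersive argument on the Duhamel formula
\[ (w^\sigma,T^\sigma)(t)=e^{-\sigma t\mathcal{L}}(I-\bar{\PP}_0)(u_0,T_0)+\int_0^t e^{-\sigma(t-r)\mathcal{L}}(I-\bar{\PP}_0)\,\mathcal{Q}(U^\sigma,U^\sigma)(r)\,dr. \]
The key input is the anisotropic dispersive estimate for the multiplier $e^{\pm i\sigma t\abs{\xi_h}/\abs{\xi}}$ (the weakly decaying operator advertised in the abstract), of the schematic form $\norm{e^{-\sigma t\mathcal{L}}(I-\bar{\PP}_0)g}_{W^{1,\infty}}\lesssim (\sigma t)^{-\alpha}\norm{g}_{W^{5,1}}$ with some $\alpha\in(0,1)$; this is where the hypotheses $N\ge 6$ and $(u_0,T_0)\in W^{5,1}$ enter. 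Because $I-\bar{\PP}_0$ acts as the identity in the velocity block on divergence-free fields, the Leray projection inside $\mathcal{Q}$ is absorbed, so the source to be fed into the estimate is $(u^\sigma\cdot\nabla u^\sigma,\,u^\sigma\cdot\nabla T^\sigma)$, whose $W^{5,1}$ norm is controlled by the product rule $\norm{fg}_{W^{5,1}}\lesssim\norm{f}_{H^5}\norm{g}_{H^6}$ and hence by $\norm{U^\sigma}_{H^N}^2$, uniformly in $\sigma$ by \eqref{eq:unif_energy}. The free term then decays like $(\sigma t)^{-\alpha}$, while the Duhamel term is bounded by $\int_0^t(\sigma(t-r))^{-\alpha}\,dr\lesssim \sigma^{-\alpha}t^{1-\alpha}$, which is finite precisely because $\alpha<1$. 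This yields $\norm{(w^\sigma,T^\sigma)(t)}_{W^{1,\infty}}\to0$ for each fixed $t\in(0,L]$, and---being uniformly bounded by $\norm{U^\sigma}_{H^N}$---also $\int_0^t\norm{(w^\sigma,T^\sigma)(r)}_{W^{1,\infty}}\,dr\to0$ by dominated convergence.

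For the vortical part, applying $\bar{\PP}_0$ to the evolution annihilates the fast term since $\bar{\PP}_0\mathcal{L}=0$, leaving
\[ \partial_t v^\sigma=\bar{\PP}_0\,\mathcal{Q}(v^\sigma,v^\sigma)+\bar{\PP}_0\big[\mathcal{Q}(U^\sigma,U^\sigma)-\mathcal{Q}(v^\sigma,v^\sigma)\big]. \]
On the zero-eigenspace $v^\sigma=(v_h^\sigma,0,0)$, so $v^\sigma\cdot\nabla v^\sigma=v_h^\sigma\cdot\nabla_h v_h^\sigma$ and $\bar{\PP}_0\mathcal{Q}(v^\sigma,v^\sigma)$ reduces (again by Lemma \ref{lem:eigen}) to $-\PP_h(v_h^\sigma\cdot\nabla_h v_h^\sigma)$, i.e.\ exactly the horizontal $2$D Euler nonlinearity with $x_3$ as a parameter; moreover $v^\sigma(0)=\bar{\PP}_0(u_0,T_0)$ agrees with the data of \eqref{eq:2DEuler}. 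The bracketed remainder $F^\sigma$ is bilinear with at least one factor of $(w^\sigma,T^\sigma)$ in each term, and by always placing the $W^{1,\infty}$ factor on that piece one gets $\norm{F^\sigma(r)}_{L^2}\lesssim \norm{(w^\sigma,T^\sigma)(r)}_{W^{1,\infty}}\norm{U^\sigma(r)}_{H^1}$, so $\int_0^t\norm{F^\sigma}_{L^2}\to0$ by the previous paragraph. Writing $d^\sigma:=v^\sigma-\bar{u}$ (both horizontal and horizontally divergence-free, with $d^\sigma(0)=0$), the standard $L^2$ energy estimate for the $2$D Euler difference gives $\frac{d}{dt}\norm{d^\sigma}_{L^2}^2\lesssim \norm{\nabla_h v^\sigma}_{L^\infty}\norm{d^\sigma}_{L^2}^2+\norm{F^\sigma}_{L^2}\norm{d^\sigma}_{L^2}$, where $\norm{\nabla_h v^\sigma}_{L^\infty}\lesssim\norm{U^\sigma}_{H^N}$ is uniformly bounded ($N\ge6$). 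Gr\"onwall then yields $\sup_{[0,L]}\norm{d^\sigma}_{L^2}\lesssim \int_0^L\norm{F^\sigma}_{L^2}\to0$, which is the claimed $L^2$ convergence.

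The genuine obstacle is the linear anisotropic dispersive estimate invoked in the second step: the phase $\abs{\xi_h}/\abs{\xi}$ degenerates both at $\xi_h=0$ and at $\xi_3=0$, so the frequency surface has vanishing curvature in whole directions and the stationary-phase decay exponent $\alpha$ is small---only a weak, and in this borderline sense anisotropic, decay survives. Establishing $\alpha\in(0,1)$ with a quantified loss of derivatives (which is what forces the specific pairing $W^{5,1}\to W^{1,\infty}$, and thereby $N\ge 6$) is the technical heart and must be carried out in the earlier sections. Granting it, the two remaining subtleties are mild: the $\PP$-in-$L^1$ issue is sidestepped by absorbing the Leray projection into $I-\bar{\PP}_0$, and the lack of decay near $r=0$ is harmless because there one integrates only the weak singularity $(\sigma(t-r))^{-\alpha}$ (integrable for $\alpha<1$) or, in the vortical estimate, because $d^\sigma(0)=0$ so that only the time-integrated forcing---controlled by the integrable decay---matters. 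Notably, no explicit resonance computation is needed: the strong $W^{1,\infty}$ decay of the wave part kills every quadratic self-interaction that could otherwise force the vortical equation, which is why the limit is undriven $2$D Euler.
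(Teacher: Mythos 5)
Your proposal is correct and follows essentially the same route as the paper: the same spectral decomposition of the skew-adjoint linear part into the stationary mode and the two dispersive modes (Lemma \ref{lem:eigen}), the same Duhamel-plus-anisotropic-dispersive-decay argument for the wave component, and the same $L^2$ Gr\"onwall estimate exploiting the divergence structure for the difference $v^\sigma-\bar u$ (Proposition \ref{prop:u_conv_abs}). The only deviations are minor: you grant the dispersive estimate as a black box (the paper proves it with $\alpha=\tfrac12$ by stationary phase after splitting on the size of $\abs{\xi_3}$, Proposition \ref{prop:disp}), and you identify the limiting nonlinearity as the slice-wise $2$D Leray-projected Euler nonlinearity directly from the structure of the zero-eigenspace in velocity variables, which is a clean substitute for the explicit vorticity computation of Lemma \ref{lem:statout}.
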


The rest of this note is dedicated to the proof of this result. Before we outline its main steps we give a few remarks on the context and relevance of this result.

\subsubsection*{Rate of Convergence}
 As the proof shows, the rate of convergence to the limiting system is not uniform on the full time interval $(0,L]$, but only away from the initial time (i.e.\ for any $\epsilon>0$ it is uniform on $[\epsilon,L]$). More precisely, for times $0\leq t\lesssim \sigma^{-1}$ there is a ``boundary layer'' where the bound for the dispersive decay is inconsequential -- see Remark \ref{rem:disp_param} on page \pageref{rem:disp_param}.
 
\subsubsection*{Initial Data of the Limiting System \eqref{eq:2DEuler}} 
 We note that the initial data for the limiting system of stratified 2d Euler equations \eqref{eq:2DEuler} are just the relevant horizontal components of those for the full system \eqref{eq:BQ_2}, the projection $\bar{\PP}_0$ ensuring that $\div_h(\bar{u}(0))=0$.
 
 In particular this means that in the limit there is no net effect from the dispersion on the horizontal motion of the fluid.
 
\subsubsection*{Comparison to Rotating Fluids}
 We point out that the limiting system \eqref{eq:2DEuler} is a \emph{stratified} system of 2d Euler equations, i.e.\ for any fixed $x_3\in\R$ the velocity $\bar{u}(t,x_1,x_2,x_3)$ solves a 2d Euler equation in the variables $t,x_1,x_2$. 
 
 This contrasts strongly with prior results on the Navier-Stokes-Coriolis system of a rotating fluid: In the work of Chemin et al.\ \cite{MR2228849} on a rotating 3d Navier-Stokes equation, the Coriolis force introduces dispersion into the system. However, in the limit of infinite dispersion (physically speaking, as the Rossby number tends to zero) one obtains a purely two-dimensional system: the velocity is independent of $x_3\in\R$. This is also known as \emph{columnar flow}.

\subsubsection*{Scaling}
 If $(u,T)$ solves \eqref{eq:BQ_1}, then so does $$(u_\lambda,T_\lambda)(t,x):=\lambda(u,T)(t,\lambda^{-1}x).$$ The invariant spaces for this scaling are thus $\dot{W}^{s,p}(\R^3)$ for $s=1+\frac{3}{p}$. In particular, the equation is critical in $\dot{H}^{\frac{5}{2}}$ and $\dot{W}^{1,\infty}$ and thus supercritical for the $L^2$ norm, which is conserved (see the energy equality \eqref{eq:en_eq}).
 
 For the convergences in Theorem \ref{thm:main} we point out that they take place in critical ($\dot{W}^{1,\infty}$) and supercritical ($L^\infty$, $L^2$) norms for the scaling.

\begin{proof}[Outline of the proof of Theorem \ref{thm:main}]
 We start by demonstrating the energy estimates available for \eqref{eq:BQ_2} in Section \ref{sec:en_est}. This shows that condition \eqref{eq:unif_energy} can be met naturally (see also Remark \ref{rem:disp_param_indep}).
 
 In Section \ref{sec:disp_eff} we study the dispersive effects in the Boussinesq system \eqref{eq:BQ_2} (Lemma \ref{lem:eigen}, Corollary \ref{cor:eigen} and Proposition \ref{prop:disp}). This is a question regarding the linear part of the equation and inspires a new choice of variables, which diagonalize the linear evolution. In view of this analysis, for more clarity we then reformulate the equations in Section \ref{sec:reform} -- see Corollary \ref{cor:newform}. This provides the foundation for studying the convergence as $\sigma\to\infty$, which is carried out in Section \ref{sec:disp_lim} and results in Proposition \ref{prop:u_conv_abs} and its Corollary \ref{cor:vort_conv_abs}. 
 
 Finally, the only task remaining is to determine the dynamics of the limiting system. This is the subject of Section \ref{sec:self_int}.
\end{proof}

\subsection{Notation}\label{sec:not}
We denote the components of a vector in $\R^3$ either by $x,y,z$ or number indices. The first two components of a vector in $\R^3$ shall be called ``horizontal'' and we introduce the notation of a subscript $h$ to denote the associated quantities and operators (derived from their three-dimensional counterparts). For example, we write $\xi_h=(\xi_1,\xi_2)\in\R^2$, so that $\xi=(\xi_h,\xi_3)\in\R^3$, $\nabla_h=(\partial_{x_1},\partial_{x_2})$ and $\Delta_h:=\partial_{x_1}^2+\partial_{x_2}^2$.

We will use the shorthand $\pm$ in indices as a replacement for either $+$ or $-$, which are then assumed to be used consistently throughout expressions in which they appear.

\section{Energy Estimates}\label{sec:en_est}
As hinted at earlier, it turns out that -- from a perspective of energy estimates -- the natural variables for the Boussinesq system \eqref{eq:BQ_gen} are $u$ and $\frac{\kappa}{\lambda}\theta$: to obtain energy estimates we multiply the equation for $\theta$ by $\frac{\kappa}{\lambda}$ and then test the first equation with $u$ and the second with $\frac{\kappa}{\lambda}\theta$. This yields
\begin{equation*}
\begin{cases}
 &\ip{\partial_t u,u}+\ip{u\cdot\nabla u,u}+\ip{\nabla p,u}=\nu \ip{\Delta u,u}+\kappa^2 \theta u_3,\\
 &\left(\partial_t \frac{\kappa}{\lambda}\theta\right)\left(\frac{\kappa}{\lambda}\theta\right) + \left(u\cdot\nabla \frac{\kappa}{\lambda}\theta\right)\left(\frac{\kappa}{\lambda}\theta\right)=\mu\left(\Delta \frac{\kappa}{\lambda}\theta\right)\left(\frac{\kappa}{\lambda}\theta\right)-\kappa^2 u_3 \theta.
\end{cases}
\end{equation*}
Upon integrating this over $\R^3$ and recalling that $\div \,u=0$ we get
\begin{equation*}
 \begin{cases}
  &\frac{1}{2}\partial_t\abs{u(t)}_{L^2}^2=-\nu\abs{\nabla u(t)}_{L^2}^2+\kappa^2\int_{\R^3}\theta u_3,\\
  &\frac{1}{2}\partial_t\abs{\frac{\kappa}{\lambda}\theta(t)}_{L^2}^2=-\mu\abs{\nabla \frac{\kappa}{\lambda}\theta(t)}_{L^2}^2-\kappa^2\int_{\R^3}\theta u_3.
 \end{cases}
\end{equation*}
Adding these two gives the following energy equality for the perturbed Boussinesq system \eqref{eq:BQ_gen}:
\begin{equation}\label{eq:en_eq}
 \partial_t\abs{u(t)}_{L^2}^2+\partial_t\abs{\frac{\kappa}{\lambda}\theta(t)}_{L^2}^2=-2\nu\abs{\nabla u(t)}_{L^2}^2-2\mu\abs{\nabla \frac{\kappa}{\lambda}\theta(t)}_{L^2}^2.
\end{equation}

As in the introduction we denote by $T:=\frac{\kappa}{\lambda}\theta$ the rescaled version of $\theta$. In terms of the dispersion parameter $\sigma:=\kappa\lambda$ we have $T=\sigma\lambda^{-2}\theta$ and the equations for $(u, T)$ are
\begin{equation}\label{eq:BQ_3}
\begin{cases}
 &\partial_t u+u\cdot\nabla u+\nabla p=\nu\Delta u+\sigma T\vec{e}_3,\\
 &\partial_t T + u\cdot\nabla T=\mu\Delta T-\sigma u_3,\\
 &\div \,u=0,
\end{cases}
\end{equation}
which equals \eqref{eq:BQ_1} in the inviscid case $\mu=\nu=0$.

If in addition we differentiate the above equations and use Gagliardo-Nirenberg interpolation we obtain inequalities for higher order derivative norms:
\begin{lemma}[Energy inequality]
 For any $k\in\N$ there exists a constant $C_k>0$ such that if $(u,T)$ solve \eqref{eq:BQ_3} we have the bound
 \begin{equation}
 \begin{aligned}
  \partial_t\left(\abs{u(t)}_{H^k}^2+\abs{T(t)}_{H^k}^2\right)\leq&-2\nu\abs{\nabla u(t)}_{H^k}^2-2\mu\abs{\nabla T(t)}_{H^k}^2\\
  & +C_k\left(\abs{\nabla u(t)}_{L^\infty}+\abs{\nabla T(t)}_{L^\infty}\right)\left(\abs{u(t)}_{H^k}^2+\abs{T(t)}_{H^k}^2\right).
  \end{aligned}
 \end{equation}
\end{lemma}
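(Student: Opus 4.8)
The plan is to derive the higher-order energy inequality by differentiating the equations, pairing with the appropriate derivative, and then carefully tracking the cancellations from incompressibility and from the antisymmetric coupling in the dispersion terms. The starting point is the same structure exploited in the $L^2$ energy equality \eqref{eq:en_eq}: when we test the $u$-equation with $u$ and the $T$-equation with $T$, the two dispersion terms $\sigma\int T u_3$ and $-\sigma\int u_3 T$ cancel exactly, and the pressure term vanishes since $\div\,u=0$. The key observation is that this cancellation persists at every derivative order $k$, so the $\sigma$-dependent terms contribute \emph{nothing} to the derivative estimate. This is crucial because it is precisely what makes the energy bound uniform in $\sigma$, as needed for \eqref{eq:unif_energy}.

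First I would fix a multi-index $\alpha$ with $|\alpha|\leq k$ and apply $\partial^\alpha$ to the system \eqref{eq:BQ_3}. Testing the resulting $u$-equation against $\partial^\alpha u$ and the $T$-equation against $\partial^\alpha T$ and integrating over $\R^3$ yields, after summing over all $|\alpha|\leq k$, the schematic identity
\begin{equation*}
\tfrac{1}{2}\partial_t\left(\abs{u}_{H^k}^2+\abs{T}_{H^k}^2\right)=-\nu\abs{\nabla u}_{H^k}^2-\mu\abs{\nabla T}_{H^k}^2-\mathcal{N}+\mathcal{D},
\end{equation*}
where $\mathcal{D}$ collects the dispersion contributions and $\mathcal{N}$ the nonlinear commutator terms. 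The dissipation terms come directly from integrating $\nu\ip{\partial^\alpha\Delta u,\partial^\alpha u}$ by parts. The pressure term $\ip{\partial^\alpha\nabla p,\partial^\alpha u}=-\ip{\partial^\alpha p,\div\partial^\alpha u}=0$ again drops out by incompressibility. For the dispersion term $\mathcal{D}$, the point is that $\partial^\alpha$ commutes with multiplication by the constant $\sigma$ and with projection onto the third component, so $\mathcal{D}=\sigma\sum_\alpha\left(\ip{\partial^\alpha T,\partial^\alpha u_3}-\ip{\partial^\alpha u_3,\partial^\alpha T}\right)=0$.

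The main work, and what I expect to be the only genuine obstacle, is estimating the nonlinear term
\begin{equation*}
\mathcal{N}=\sum_{\abs{\alpha}\leq k}\left(\ip{\partial^\alpha(u\cdot\nabla u),\partial^\alpha u}+\ip{\partial^\alpha(u\cdot\nabla T),\partial^\alpha T}\right).
\end{equation*}
Here I would exploit that the top-order transport contribution cancels: writing $\partial^\alpha(u\cdot\nabla f)=u\cdot\nabla\partial^\alpha f+[\partial^\alpha,u\cdot\nabla]f$, the first piece satisfies $\ip{u\cdot\nabla\partial^\alpha f,\partial^\alpha f}=\tfrac{1}{2}\int u\cdot\nabla(\partial^\alpha f)^2=0$ after integrating by parts and using $\div\,u=0$. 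What remains is the commutator $[\partial^\alpha,u\cdot\nabla]f$, for which the Kato--Ponce (fractional Leibniz) commutator estimate gives
\begin{equation*}
\abs{[\partial^\alpha,u\cdot\nabla]f}_{L^2}\lesssim\abs{\nabla u}_{L^\infty}\abs{f}_{H^k}+\abs{u}_{H^k}\abs{\nabla f}_{L^\infty}.
\end{equation*}

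Finally I would assemble these estimates. Applying the commutator bound to both $f=u$ and $f=T$ and using Cauchy--Schwarz, every surviving nonlinear term is controlled by $\left(\abs{\nabla u}_{L^\infty}+\abs{\nabla T}_{L^\infty}\right)\left(\abs{u}_{H^k}^2+\abs{T}_{H^k}^2\right)$, with a constant $C_k$ depending only on $k$ through the number of multi-indices and the Kato--Ponce constant. Multiplying through by $2$ and moving the dissipation terms to the right-hand side gives exactly the claimed inequality. The Gagliardo--Nirenberg interpolation mentioned in the statement enters in justifying the commutator estimate, whose standard proof interpolates the intermediate derivatives of $u$ and $f$ against the extremal $L^\infty$ and $H^k$ norms; I would either cite this directly or sketch the interpolation to keep the constant's structure transparent.
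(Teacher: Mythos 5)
Your proof is correct and follows essentially the same route as the paper's (sketched) argument: differentiate, test, use $\div\,u=0$ and the skew-symmetry of the $\sigma$-terms to kill the linear contributions and the top-order transport term, and control the remaining commutator by $\abs{\nabla u}_{L^\infty}\abs{f}_{H^k}+\abs{u}_{H^k}\abs{\nabla f}_{L^\infty}$ via Kato--Ponce/Gagliardo--Nirenberg interpolation. Your write-up is simply more explicit than the paper's sketch, in particular about the exact cancellation of the dispersive terms, which is the point of Remark \ref{rem:disp_param_indep}.
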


\begin{proof}
 This is a standard argument, so we only give a quick sketch of the proof.
 
 For $0\leq l\leq k$, we take a derivative $D^l$ of order $l$ of the equations, multiply by $D^l u$ and $D^l T$ in the respective equations and integrate over $\R^3$. This gives the time derivative of the $L^2$ norms of $D^lu$ and $D^lT$, and their gradients in $L^2$. Upon summation the remaining linear terms vanish and we are only left with the nonlinear pieces $\ip{D^l(u\cdot\nabla u),D^l u}_{L^2}$ and $\ip{D^l(u\cdot\nabla T),D^l T}_{L^2}$. Since $\div\, u=0$, this vanishes if all $l$ derivatives fall onto the gradient term, so by interpolation we can bound these by $\left(\abs{\nabla u}_{L^\infty}+\abs{\nabla T}_{L^\infty}\right)\left(\abs{u}^2_{\dot{H}^l}+\abs{T}^2_{\dot{H}^l}\right)$ (for more details see \cite[Section 4]{SQGpaper}, for example). Now we need only sum over all such derivatives of order $l$ and orders $l\leq k$.
\end{proof}

\begin{remark}[Dependence on $\sigma$]\label{rem:disp_param_indep}
 We note that the energy estimates are uniform in the dispersion parameter $\sigma$, which is natural for a dispersive effect (given by a skew-symmetric singular perturbation) measured in $L^2$ based Sobolev spaces. Most importantly, it implies that condition \eqref{eq:unif_energy} is met naturally in the standard local well-posedness theory.
\end{remark}

In the inviscid case, $\nu=\mu=0$, we can deduce from Gr\"onwall's inequality the growth bound
\begin{equation}
 \abs{u(t)}_{H^k}+\abs{T(t)}_{H^k}\leq \left(\abs{u(0)}_{H^k}+\abs{T(0)}_{H^k}\right)\exp\left(C_k\int_0^t \abs{\nabla u(s)}_{L^\infty}+\abs{\nabla T(s)}_{L^\infty}\;ds\right).
\end{equation}

From this it follows directly that the inviscid Boussinesq system \eqref{eq:BQ_1} is locally well-posed in $H^k$ for $k\geq 3$ (with $H^k$ estimates uniform in $\sigma$).

\begin{remark}[Physical relevance]
In terms of the unrescaled variables in system \eqref{eq:BQ_gen} one may wish to separately take note of two cases for the limit $\sigma=\kappa\lambda\to\infty$, which both guarantee uniform (in $\sigma$) energy estimates as required in \eqref{eq:unif_energy}:
\begin{enumerate}
 \item Fix $\lambda>0$, let $\kappa\to\infty$: Any initial data $(u_0,\theta_0)$ work (as long as they are of small size, as in Theorem \ref{thm:main}). \\This is the case where we fix a stratification profile $u=0$, $\theta=\lambda^2 z$ and let the gravitational force (through the constant $\kappa$) tend to infinity.
 \item Let $\lambda\to\infty$, fix $\kappa>0$: We need $\theta_0=0$. \\Here we fix the strength of gravity $\kappa$ and consider perturbations of increasingly steep stratified solutions $u=0$, $\theta=\lambda^2 z$. Due to the scaling of the energy we have to require that the initial data in $\theta$ vanish, else the energy will not remain uniformly bounded as $\lambda\to\infty$.
\end{enumerate}
\end{remark}

\section{Dispersive Effects in the Inviscid System}\label{sec:disp_eff}
To understand the dispersive effects present in \eqref{eq:BQ_gen} we continue our study of the inviscid case, where $\nu=\mu=0$. For simplicity of notation we henceforth assume the subscripts for $T$ as in the previous section to be understood and thus drop them from the notation. For convenience we recall here the relevant system \eqref{eq:BQ_1},
\begin{equation}\label{eq:BQ_new}
\begin{cases}
 &\partial_t u+u\cdot\nabla u+\nabla p=\sigma T\vec{e}_3,\\
 &\partial_t T + u\cdot\nabla T=-\sigma u_3,\\
 &\div \,u=0,
\end{cases}
\end{equation}
where $\sigma:=\kappa\lambda$ and the pressure $p$ can be expressed in terms of $u$ through the third equation as $p=(-\Delta)^{-1}(\div(u\cdot\nabla u)-\sigma\partial_3 T)$. In view of a decomposition of this system into a linear and a nonlinear part we split the pressure as
\begin{equation*}
 p=p^L+p^{NL},\quad p^L:=-\sigma(-\Delta)^{-1}\partial_3 T,\quad p^{NL}:=(-\Delta)^{-1}\div(u\cdot\nabla u).
\end{equation*}

The linear part of \eqref{eq:BQ_new} then reads
\begin{equation}\label{eq:BQ_new_lin}
 \begin{cases}
 &\partial_t u+\nabla p^L=\sigma T\vec{e}_3,\\
 &\partial_t T =-\sigma u_3,\\
 &\div \,u=0,
\end{cases}
\end{equation}
and we can formally rewrite \eqref{eq:BQ_new} as
\begin{equation}\label{eq:BQ_new2}
\begin{cases}
 &\partial_t u+N^u(u,u)+\nabla p^L=\sigma T\vec{e}_3,\\
 &\partial_t T + M^u(u,T)=-\sigma u_3,\\
 &\div \,u=0,
\end{cases}
\end{equation}
with nonlinearities $N^u(u,u)=u\cdot\nabla u+\nabla p^{NL}$ and $M^u(u,T)=u\cdot\nabla T$.

In the following it will be convenient to also work with the vorticity variable $\omega:=\curl\, u$, since this avoids having to solve for the pressure. The velocity $u$ can then be expressed in terms of the vorticity $\omega$ as $u=(-\Delta)^{-1}\curl\, \omega$. The full, nonlinear Boussinesq system in vorticity formulation is then given by the equations
\begin{equation}\label{eq:Bouss_vort}
\begin{cases}
 &\partial_t \omega+N^\omega(\omega,\omega)=\sigma\left(\begin{array}{c}\partial_y T\\ -\partial_x T\\0\end{array}\right),\\
 &\partial_t T +M^\omega(\omega,T)=\sigma (-\Delta)^{-1}\left(\partial_y\omega_1-\partial_x\omega_2\right),\\
 &\div\,\omega=0,
\end{cases}
\end{equation}
where
\begin{equation*}
N^\omega(\omega,\omega):=\curl \left(u\cdot \nabla u\right)=\omega\cdot\nabla u- u\cdot\nabla{\omega}
\end{equation*}
denotes the Euler nonlinearity in vorticity form and the transport term $u\cdot\nabla T$ has been written as $M^\omega(\omega,T)$. We notice that the first equation implies $\partial_t\div\, \omega=0$, so that the equation $\div\,\omega=0$ reduces to a condition on the initial data.

The associated linear system reads
\begin{equation}\label{eq:vortlin}
\begin{cases}
 &\partial_t \omega=\sigma\left(\begin{array}{c}\partial_y T\\ -\partial_x T\\0\end{array}\right),\\
 &\partial_t T =\sigma (-\Delta)^{-1}\left(\partial_y\omega_1-\partial_x\omega_2\right),\\
 &\div\,\omega_0=0,
\end{cases}
\end{equation}
since also here $\partial_t\div\,\omega=0$ follows from the first equation. 

 The space of solutions to the linear systems \eqref{eq:BQ_new_lin} and \eqref{eq:vortlin} is three-dimensional and can be computed explicitly as follows\footnote{We recall the notation $\pm$ in indices as a replacement for either $+$ or $-$, which are then assumed to be used consistently.}:
\begin{lemma}\label{lem:eigen}
 In Fourier space the linear system \eqref{eq:BQ_new_lin} can be diagonalized for any $\xi\in\R^3$ with $(\xi_1,\xi_2)\neq(0,0)$ in the following three eigenvectors and eigenvalues:
 \begin{equation}
  \begin{aligned}
   &\left(\begin{array}{c}-\xi_2\\ \xi_1\\ 0\\ 0\end{array}\right)\text{ with eigenvalue } 0,\\
   &\left(\begin{array}{c} \xi_1\xi_3\\ \xi_2\xi_3\\ -\abs{\xi_h}^2 \\ -i\abs{\xi_h}\abs{\xi}\end{array}\right)\text{ with eigenvalue }i\frac{\abs{\xi_h}}{\abs{\xi}},\\
   &\left(\begin{array}{c} \xi_1\xi_3\\ \xi_2\xi_3\\ -\abs{\xi_h}^2 \\ i\abs{\xi_h}\abs{\xi}\end{array}\right)\text{ with eigenvalue }-i\frac{\abs{\xi_h}}{\abs{\xi}}.
  \end{aligned}
 \end{equation}
 We denote the corresponding eigenspaces in physical space by $E^u_0$, $E^u_-$ and $E^u_+$, and the projections onto them by $\PP^u_0$, $\PP^u_-$ and $\PP^u_+$, respectively. 

 The corresponding eigenvectors and eigenvalues for the linear system \eqref{eq:vortlin} are
 \begin{equation}
  \begin{aligned}
   &\left(\begin{array}{c}-\xi_1\xi_3\\ -\xi_2\xi_3\\ \abs{\xi_h}^2\\ 0\end{array}\right)\text{ with eigenvalue } 0,\\
   &\left(\begin{array}{c}-\xi_2\abs{\xi}\\ \xi_1\abs{\xi}\\ 0\\ \abs{\xi_h}\end{array}\right)\text{ with eigenvalue } i\frac{\abs{\xi_h}}{\abs{\xi}},\\
   &\left(\begin{array}{c}-\xi_2\abs{\xi}\\ \xi_1\abs{\xi}\\ 0\\ -\abs{\xi_h}\end{array}\right)\text{ with eigenvalue }-i\frac{\abs{\xi_h}}{\abs{\xi}}.
  \end{aligned}
 \end{equation}
 We denote the corresponding eigenspaces in physical space by $E^\omega_0$, $E^\omega_-$ and $E^\omega_+$, and the projections onto them by $\PP^\omega_0$, $\PP^\omega_-$ and $\PP^\omega_+$, respectively. 
\end{lemma}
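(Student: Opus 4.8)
The plan is to reduce each linear system to a system of ordinary differential equations by passing to Fourier space, and then to carry out an explicit finite-dimensional eigenvalue computation at each fixed frequency $\xi$ with $\xi_h\neq 0$.

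First I would Fourier transform \eqref{eq:BQ_new_lin}. Using $\widehat{\partial_j f}=i\xi_j\hat f$ and $\widehat{(-\Delta)^{-1}f}=\abs{\xi}^{-2}\hat f$, the linear pressure $p^L=-\sigma(-\Delta)^{-1}\partial_3 T$ has symbol $\hat p^L=-\sigma i\xi_3\abs{\xi}^{-2}\hat T$, so that $\widehat{\nabla p^L}=\sigma\xi_3\abs{\xi}^{-2}\xi\,\hat T$. Substituting this into the velocity equation and writing $V$ for the column vector with entries $\hat u_1,\hat u_2,\hat u_3,\hat T$, the system \eqref{eq:BQ_new_lin} becomes $\partial_t V=\sigma A(\xi)V$ with
\begin{equation*}
 A(\xi)=\begin{pmatrix} 0 & 0 & 0 & -\xi_1\xi_3\abs{\xi}^{-2}\\ 0 & 0 & 0 & -\xi_2\xi_3\abs{\xi}^{-2}\\ 0 & 0 & 0 & \abs{\xi_h}^2\abs{\xi}^{-2}\\ 0 & 0 & -1 & 0\end{pmatrix}.
\end{equation*}
A direct check gives $\xi\cdot\partial_t\hat u=0$, so the divergence-free constraint $\xi\cdot\hat u=0$ is preserved and cuts out an invariant three-dimensional subspace on which the evolution actually takes place.

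Next I would compute the eigendata of $A(\xi)$. Expanding the characteristic determinant twice along the column with a single nonzero off-diagonal entry yields $\det(A-\lambda I)=\lambda^2\bigl(\lambda^2+\abs{\xi_h}^2\abs{\xi}^{-2}\bigr)$, so the eigenvalues are $0$ and $\pm i\abs{\xi_h}\abs{\xi}^{-1}$, matching the statement. Solving $Av=0$ forces $\hat T=0$ and $\hat u_3=0$ (here $\xi_h\neq0$ is used), leaving the two-dimensional kernel $\{(\hat u_1,\hat u_2,0,0)\}$; intersecting with $\xi\cdot\hat u=0$ isolates the single divergence-free direction proportional to $(-\xi_2,\xi_1,0,0)$, which is the stated zero mode. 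For $\lambda=\pm i\abs{\xi_h}\abs{\xi}^{-1}$ one solves the two remaining linear relations for $(\hat u_1,\hat u_2,\hat u_3)$ in terms of $\hat T$ and reads off the claimed eigenvectors; each turns out to satisfy $\xi\cdot\hat u=0$ automatically, a convenient internal consistency check.

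Finally, for the vorticity system \eqref{eq:vortlin} I would exploit that it is exactly the curl of \eqref{eq:BQ_new_lin} together with $u=(-\Delta)^{-1}\curl\,\omega$: since $\curl$ commutes with the linear evolution, the eigenvalues coincide with those just found, and the eigenvectors are obtained by applying the Fourier symbol $i\xi\times(\cdot)$ of $\curl$ to the velocity components of the eigenvectors of $A(\xi)$ while leaving the $\hat T$-component unchanged. (Alternatively one repeats the $4\times4$ computation directly for the symbol $B(\xi)$ read off from \eqref{eq:vortlin}.) The main difficulty is purely bookkeeping: carrying the nonlocal factors $\abs{\xi}^{-2}$ and all signs correctly through these substitutions, and checking that the zero eigenvalue of the full $4\times4$ symbol has the right geometric multiplicity once $\xi\cdot\hat u=0$ (respectively $\xi\cdot\hat\omega=0$) is imposed. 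It is precisely this constraint that collapses the a priori four-dimensional eigenvalue problem to the three-dimensional solution space asserted in the lemma.
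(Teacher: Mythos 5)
Your proposal is correct and follows essentially the same route as the paper: pass to Fourier space, reduce to an explicit $4\times 4$ eigenvalue computation of the symbol at each fixed frequency with $\xi_h\neq 0$, and use the (automatically propagated) divergence-free constraint to cut the two-dimensional kernel down to the single stated zero mode. The only cosmetic differences are that you treat the velocity system in detail where the paper treats the vorticity one (each declaring the other analogous), and that your curl-conjugation shortcut for transferring the eigenvectors is a valid alternative to redoing the $4\times 4$ computation -- just track the signs carefully there, since the $\pm$ pairing of eigenvectors to eigenvalues is sensitive to the orientation conventions and your transfer, carried out literally, swaps the $\pm$ labels relative to the statement's vorticity half.
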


\begin{remark}
 As will be shown later (see Proposition \ref{prop:disp}), the modes with eigenvalues $\pm i\frac{\abs{\xi_h}}{\abs{\xi}}$ exhibit dispersive decay.
\end{remark}

\begin{proof}
We only give the computation for \eqref{eq:vortlin}, since \eqref{eq:BQ_new_lin} works analogously. 

After taking Fourier transforms, \eqref{eq:vortlin} reads
\begin{equation}
 \partial_t\left(\begin{array}{c}\hat{\omega}_1\\ \hat{\omega}_2\\ \hat{\omega}_3\\ \hat{T}\end{array}\right)=\sigma\underbrace{\left(
\begin{array}{cccc}
0 & 0 & 0 & -i\xi_2\\
0 & 0 & 0 & i\xi_1\\
0 & 0 & 0 & 0\\
-i\frac{\xi_2}{\abs{\xi}^2} & i\frac{\xi_1}{\abs{\xi}^2} & 0  & 0
\end{array}\right)}_{=:A(\xi)}\left(\begin{array}{c}\hat{\omega}_1\\ \hat{\omega}_2\\ \hat{\omega}_3\\ \hat{T}\end{array}\right).
\end{equation}

The matrix $A(\xi)$ is diagonalizable as long as $\xi_1\neq 0$ or $\xi_2\neq 0$. Its eigenvalues and eigenspaces can be computed to be $v_1:=\left(0,0,1,0\right)^\intercal$, $v_2:=\left(\xi_1,\xi_2,1,0\right)^\intercal$ with eigenvalue $0$, $v_3:=\left(-\xi_2\abs{\xi},\xi_1\abs{\xi},0,\abs{\xi_h}\right)^\intercal$ with eigenvalue $i\frac{\abs{\xi_h}}{\abs{\xi}}$ and $v_4:=\left(-\xi_2\abs{\xi},\xi_1\abs{\xi},0,-\abs{\xi_h}\right)^\intercal$ with eigenvalue $-i\frac{\abs{\xi_h}}{\abs{\xi}}$.

A priori the linear system thus has two stationary modes ($v_1$ and $v_2$) and two modes whose time evolution is given by $e^{\pm i\frac{\abs{\xi_h}}{\abs{\xi}}}$ ($v_3$ and $v_4$). We notice that through the condition that $\div\,\omega=0$ (which is automatically propagated) we can reduce the system to one with three degrees of freedom: it is automatically satisfied by $v_3$ and $v_4$, but not individually by $v_1$ and $v_2$. Hence if $s:=\alpha v_1+\beta v_2$ is an element of the eigenspace of the eigenvalue 0 we impose the condition that $\div\,\omega=0$ through
\begin{equation*}
 0=\xi\cdot s=\xi\cdot\left(\alpha v_1+\beta v_2\right)=\xi_3\alpha+(\xi_1^2+\xi_2^2)\beta,
\end{equation*}
so that $\beta=-\frac{\xi_3}{\xi_1^2+\xi_2^2}\alpha$ and thus $s=\alpha\left(-\frac{\xi_1\xi_3}{\xi_1^2+\xi_2^2},-\frac{\xi_2\xi_3}{\xi_1^2+\xi_2^2},1,0\right)^\intercal$ spans the eigenspace of eigenvalue 0.
\end{proof}

By projecting onto the eigenspaces of the linear system one can then directly deduce the following
\begin{cor}\label{cor:eigen}
 A solution $(u,T)$ of \eqref{eq:BQ_new2} or $(\omega,T)$ of \eqref{eq:Bouss_vort} can be decomposed as $$(u,T)(t)=(S(t),0)+D_-(t)+D_+(t) \quad\text{or}\quad (\omega,T)(t)=(s(t),0)+d_-(t)+d_+(t),$$ where $(S(t),0):=\PP^u_0(u,T)(t)\in E^u_0$, $(s(t),0):=\PP^\omega_0(\omega,T)(t)\in E^\omega_0$, $D_\pm(t):=\PP^u_\pm(u,T)(t)\in E^u_\pm$ and $d_\pm(t):=\PP^\omega_\pm(\omega,T)(t)\in E^\omega_\pm$.
 
 More explicitly, there exist functions $\psi(t), a(t)$ and $b(t)$ such that
 \begin{equation}\label{eq:vort_eigen}
 \begin{aligned}
  (S(t),0)&=\left(\begin{array}{c}-\partial_2 \psi\\ \partial_1 \psi\\ 0\\0\end{array}\right), (s(t),0)=\left(\begin{array}{c}-\partial_1\partial_3 \psi\\ -\partial_2\partial_3 \psi\\ \Delta_h \psi\\0\end{array}\right),\\
  D_+(t)&=\left(\begin{array}{c}\partial_1\partial_3 a\\ \partial_2\partial_3 a\\ -\Delta_h a\\ i\abs{\nabla_h}\abs{\nabla}a\end{array}\right), d_+(t)=\left(\begin{array}{c}-\partial_2 \abs{\nabla}a\\ \partial_1 \abs{\nabla}a\\ 0\\ i\abs{\nabla_h}a\end{array}\right),\\
  D_-(t)&=\left(\begin{array}{c}\partial_1\partial_3 b\\ \partial_2\partial_3 b\\ -\Delta_h b\\ -i\abs{\nabla_h}\abs{\nabla}b\end{array}\right), d_-(t)=\left(\begin{array}{c}-\partial_2 \abs{\nabla}b\\ \partial_1 \abs{\nabla}b\\ 0\\ -i\abs{\nabla_h}b\end{array}\right).\\
 \end{aligned}
 \end{equation}
\end{cor}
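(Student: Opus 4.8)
The plan is to obtain the decomposition directly from the pointwise-in-frequency diagonalization of Lemma \ref{lem:eigen}, and then to read off the explicit formulas \eqref{eq:vort_eigen} by parametrizing each one-dimensional eigenspace by a single scalar potential. For the decomposition itself I would argue that, by Lemma \ref{lem:eigen}, at every frequency $\xi$ with $\xi_h\neq 0$ the three listed eigenvectors span the three-dimensional space of admissible states, i.e.\ those satisfying the automatically propagated constraint $\div\, u=0$ (respectively $\div\,\omega=0$); indeed each listed eigenvector is readily checked to be divergence free. Thus at each such frequency one has a direct sum $E_0\oplus E_+\oplus E_-$, and the corresponding spectral projections are Fourier multiplier operators, homogeneous of degree $0$ and smooth away from the measure-zero set $\{\xi_h=0\}$. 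Taking $\PP^u_\alpha$ and $\PP^\omega_\alpha$ (for $\alpha\in\{0,+,-\}$) to be these multipliers and applying them to $(u,T)(t)$, respectively $(\omega,T)(t)$, at each fixed time yields the claimed splittings regardless of the nonlinearity; the exceptional set $\{\xi_h=0\}$ is negligible for the $L^2$- and $H^N$-based statements in which these projections are later used.

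For the explicit formulas I would exploit that each eigenspace is one-dimensional at every frequency, so every element is a scalar multiple of the corresponding eigenvector, the scalar being a function of $\xi$. Writing that scalar as a Fourier multiplier applied to a single scalar potential is then just a matter of factoring: for the stationary mode one sets $\hat\psi$ to be the coefficient of $(-\xi_2,\xi_1,0,0)^\intercal$ (absorbing the factor coming from $\nabla_h$), which reproduces $(S,0)=(-\partial_2\psi,\partial_1\psi,0,0)^\intercal$, and likewise the coefficients of the $\pm$-eigenvectors define $a$ and $b$ and give $D_\pm$ in the stated form. Each such identity is then confirmed by a short Fourier-space computation matching components against the eigenvector entries of Lemma \ref{lem:eigen}.

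The step I expect to carry the real content is verifying that a \emph{single} potential per mode simultaneously produces the velocity and the vorticity formulas. This is where the relation $\omega=\curl\, u$ enters: since the curl sends each velocity eigenspace onto the vorticity eigenspace of the same eigenvalue, the projections intertwine, $\curl\circ\PP^u_\alpha=\PP^\omega_\alpha\circ\curl$, so once $\psi,a,b$ are fixed on the velocity side the vorticity pieces are already determined. Concretely one checks $\curl\, S=s$ for the stationary mode, and the analogous identities relating $D_\pm$ to $d_\pm$ together with the matching of the $T$-components. The only delicate point is bookkeeping the powers of $\abs{\nabla}$ by which the velocity and vorticity eigenvectors differ in their normalization, so that the multipliers appearing in the two sets of formulas in \eqref{eq:vort_eigen} are mutually consistent; this is precisely the place where any sign in the $T$-components must be tracked with care.
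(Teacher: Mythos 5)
Your proposal is correct and takes essentially the same route as the paper, which gives no separate argument beyond asserting that the corollary follows ``by projecting onto the eigenspaces of the linear system'' computed in Lemma \ref{lem:eigen}; you simply spell out what is implicit there, namely that the spectral projections are degree-zero Fourier multipliers smooth away from $\{\xi_h=0\}$ and that each one-dimensional eigenspace is parametrized by a single scalar potential read off from the eigenvector entries. Your closing remark about tracking the signs and the powers of $\abs{\nabla}$ relating the velocity and vorticity potentials is well placed, as that normalization is the only point where the two columns of \eqref{eq:vort_eigen} must be reconciled.
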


\begin{remark}
 Here the functions $a$ and $b$ are complex-valued and arise from the projections onto the eigenspaces of the propagators $e^{it\frac{\abs{\nabla_h}}{\abs{\nabla}}}$ and $e^{-it\frac{\abs{\nabla_h}}{\abs{\nabla}}}$, respectively. This is natural since the splitting stems from a diagonalization of a real-valued system with (conjugate) complex eigenvalues on the Fourier side -- compare also the linear wave equation and the half-wave operators that arise when diagonalizing it as a first-order system.\footnote{Alternatively we may rewrite the linear modes using the propagators $\sin\left(t\frac{\abs{\nabla_h}}{\abs{\nabla}}\right)$ and $\cos\left(t\frac{\abs{\nabla_h}}{\abs{\nabla}}\right)$: We know that a solution $(\omega_L,T_L)(t)$ to the linear problem \eqref{eq:vortlin} in the eigenspaces $E_\pm^\omega$ can be written as
 \begin{equation*}
  (\omega_L,T_L)(t)=e^{it\frac{\abs{\nabla_h}}{\abs{\nabla}}}\left(\begin{array}{c}-\partial_2 \abs{\nabla}a_0\\ \partial_1 \abs{\nabla}a_0\\ 0\\ i\abs{\nabla_h}a_0\end{array}\right) + e^{-it\frac{\abs{\nabla_h}}{\abs{\nabla}}}\left(\begin{array}{c}-\partial_2 \abs{\nabla}b_0\\ \partial_1 \abs{\nabla}b_0\\ 0\\ -i\abs{\nabla_h}b_0\end{array}\right),
 \end{equation*}
 with $a_0, b_0:\R^3\to\C$ complex-valued projections of the initial data onto the eigenspaces $E_\pm^\omega$. By expanding and regrouping these we deduce that there exist \emph{real-valued} functions $\alpha_0,\beta_0:\R^3\to\R$ such that
 \begin{equation*}
  (\omega_L,T_L)(t)=\cos\left(t\frac{\abs{\nabla_h}}{\abs{\nabla}}\right)\left(\begin{array}{c}-\partial_2 \abs{\nabla}\alpha_0\\ \partial_1 \abs{\nabla}\alpha_0\\ 0\\ \abs{\nabla_h}\beta_0\end{array}\right)-\sin\left(t\frac{\abs{\nabla_h}}{\abs{\nabla}}\right)\left(\begin{array}{c}-\partial_2 \abs{\nabla}\beta_0\\ \partial_1 \abs{\nabla}\beta_0\\ 0\\ \abs{\nabla_h}\alpha_0\end{array}\right).
 \end{equation*}}
\end{remark} 
 
\begin{remark}[Stream Function] 
 The function $\psi$ of the stationary mode is real-valued and can be identified as the \emph{stream function} of a two-dimensional, incompressible flow (for each fixed $x_3\in\R$).
\end{remark}

\subsection{Dispersive Decay}
We study now the decay properties of the semigroup generated by the linear operators that arise in \eqref{eq:BQ_new_lin} and \eqref{eq:vortlin}. We denote by $L_\pm$ the linear operators with Fourier symbols $\pm i\frac{\abs{\xi_h}}{\abs{\xi}}$, i.e.\ for any (possibly vector valued) Schwartz function $f\in\mathcal{S}(\R^3)$ we let
\begin{equation*}
 L_\pm f(x):=\iFT{\pm i\frac{\abs{\xi_h}}{\abs{\xi}}\hat{f}(\xi)}.
\end{equation*}
For the semigroup generated by $L_\pm$ we then have the following decay estimate:
\begin{prop}\label{prop:disp}
 There exists a constant $C>0$ such that for any $f\in C^\infty_c(\R^3)$
 \begin{equation}\label{eq:disp}
  \abs{e^{L_\pm t}f}_{L^\infty}\leq C t^{-\frac{1}{2}}\abs{f}_{\dot{B}^3_{1,1}}. 
 \end{equation}
\end{prop}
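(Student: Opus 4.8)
The plan is to reduce the claim to a single fixed-frequency oscillatory-integral bound, exploiting that the symbol $\Phi(\xi):=\abs{\xi_h}/\abs{\xi}$ is homogeneous of degree $0$. First I would take a Littlewood--Paley decomposition $f=\sum_j P_j f$, with $P_j$ localizing to $\{\abs{\xi}\sim 2^j\}$, and write $e^{L_\pm t}P_j f = K^{(j)}_t * \tilde P_j f$, where $\tilde P_j$ is a fattened projector and $K^{(j)}_t(x):=\iFT{e^{\pm it\Phi(\xi)}\chi(2^{-j}\xi)}(x)$. Young's inequality gives $\abs{e^{L_\pm t}P_j f}_{L^\infty}\le \abs{K^{(j)}_t}_{L^\infty}\abs{\tilde P_j f}_{L^1}$. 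The change of variables $\xi=2^j\eta$ together with $\Phi(2^j\eta)=\Phi(\eta)$ yields $K^{(j)}_t(x)=2^{3j}K^{(0)}_t(2^jx)$, so that $\abs{K^{(j)}_t}_{L^\infty}=2^{3j}\abs{K^{(0)}_t}_{L^\infty}$ and the decay is scale-invariant. It therefore suffices to prove the single estimate $\abs{K^{(0)}_t}_{L^\infty}\lesssim t^{-1/2}$ for $t\ge1$, the range $t\le1$ being covered by the trivial bound $\abs{K^{(0)}_t}_{L^\infty}\le\int\abs{\chi}\lesssim 1\le t^{-1/2}$. Summing over $j$ then gives $\abs{e^{L_\pm t}f}_{L^\infty}\lesssim t^{-1/2}\sum_j 2^{3j}\abs{\tilde P_jf}_{L^1}\sim t^{-1/2}\abs f_{\dot{B}^3_{1,1}}$, the exponent $3$ in the Besov norm matching exactly the volume factor $2^{3j}$.

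It remains to show $\sup_y\abs{\int_{\R^3}e^{i(y\cdot\eta\pm t\Phi(\eta))}\chi(\eta)\,d\eta}\lesssim t^{-1/2}$ for $t\ge1$, with $\chi\in C_c^\infty$ supported in a fixed annulus. Since the linear term $y\cdot\eta$ does not affect second derivatives, the decay is dictated by the curvature of $\Phi$. Passing to spherical coordinates $\eta=\rho\,\omega$, the radial variable enters only through this linear phase and the compactly supported cutoff, so the oscillation is carried by the surface integral over $S^2$ with phase $t\Phi(\omega)+\rho\,y\cdot\omega$, where $\Phi(\omega)=\sin\theta$ depends only on the polar angle $\theta$ (measured from $e_3$) and is singular precisely along the vertical axis $\{\eta_h=0\}$. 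Using the azimuthal symmetry of $\Phi$ I may rotate so that $y_h$ is a multiple of $e_1$, and I would split $\chi$ with a smooth cutoff into a piece supported near the axis and a piece bounded away from it.

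Away from the axis $\Phi$ is smooth and I would invoke stationary phase. Stationary points require $y=\mp t\nabla\Phi(\eta)$, and since $\nabla\Phi$ is bounded on the annulus this confines the relevant $y$ to $\abs y\lesssim t$; away from such points, integration by parts in $\eta$ gives decay faster than any power, uniformly in $y$. At the stationary set the rate is governed by the Hessian of $\Phi|_{S^2}$, whose eigenvalues are $-\sin\theta$ and $\cos^2\theta/\sin\theta$: both are nonzero for generic $\theta$ (giving $t^{-1}$), but on the equator $\{\theta=\pi/2\}$ the second eigenvalue vanishes and the rank drops to one, producing the slower and hence dominant rate $t^{-1/2}$. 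In the remaining region near the axis, $\Phi$ is non-smooth but small, and the vanishing Jacobian $\abs{\eta_h}\,d\abs{\eta_h}$ renders the contribution $O(t^{-1})$, negligible for the claimed rate.

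The hard part will be to extract the clean rate $t^{-1/2}$ uniformly in the translation parameter $y$ (equivalently, uniformly over all $x$) while simultaneously coping with the conic singularity of $\Phi$ along $\{\eta_h=0\}$ and with the Hessian's non-constant rank. A naive reduction to a one-dimensional van der Corput estimate in $\theta$ followed by integration in the azimuthal angle appears to lose a logarithm; the cleanest remedy is to treat the worst case directly: near the equator $\Phi\approx 1-\tfrac12(\theta-\tfrac\pi2)^2$ is exactly independent of the azimuthal angle, so the Fresnel integral in $\theta$ yields $t^{-1/2}$ while the azimuthal integral carries only the linear phase $\rho\,y\cdot\omega$ and stays bounded (a Bessel-type integral), giving $t^{-1/2}$ with no logarithmic loss.
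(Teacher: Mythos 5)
Your front end is sound and is exactly what the paper's appeal to scaling amounts to: the Littlewood--Paley decomposition, the observation that $\abs{\xi_h}/\abs{\xi}$ is homogeneous of degree $0$ so that $K^{(j)}_t(x)=2^{3j}K^{(0)}_t(2^jx)$ with \emph{no} rescaling of $t$, and the bookkeeping $\sum_j 2^{3j}\abs{\tilde P_j f}_{L^1}\sim\abs{f}_{\dot B^3_{1,1}}$ all check out, and you correctly locate the two problem regions (the conic singularity on the axis $\{\xi_h=0\}$ and the Hessian degeneracy on the equator $\{\xi_3=0\}$). The gap is in your treatment of the equator, which is the crux. Your remedy --- ``the Fresnel integral in $\theta$ yields $t^{-1/2}$ while the azimuthal integral carries only the linear phase'' --- tacitly assumes that the translation phase $y\cdot\eta$ contributes nothing to the $\theta$-curvature. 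It does: rotating so that $y_h=\abs{y_h}e_1$, the phase at fixed $(\rho,\phi)$ is $(t+\rho\abs{y_h}\cos\phi)\sin\theta+\rho y_3\cos\theta$, a pure sinusoid in $\theta$ with amplitude $\sqrt{(t+\rho\abs{y_h}\cos\phi)^2+\rho^2y_3^2}$. For $\abs{y_h}\gtrsim t$ and suitable $\phi$ this amplitude can be made small, so \emph{every} $\theta$-derivative of the phase is small and no van der Corput bound uniform in $y$ is available; the decay must then be recovered from the $\rho$- and $\phi$-integrals (or from the observation that stationary points of the full gradient with $\abs{y}\sim t$ force $\abs{\xi_3}\sim1$, so that near the equator one should instead integrate by parts non-stationarily). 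That extra case analysis in $\abs{y}/t$ is precisely the uniformity problem you flagged, and your proposal does not resolve it. The near-axis claim of $O(t^{-1})$ is likewise unsubstantiated: there $\partial_r\Phi\to\abs{\eta_3}^{-1}\sim1$ ($r=\abs{\eta_h}$), so for $\abs{y}\sim t$ there are radial stationary points sitting on the conic singularity, and the Jacobian weight $r\,dr$ by itself does not produce $t^{-1}$.

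The paper closes exactly this gap with a cruder but fully effective device, and since only the rate $t^{-1/2}$ is claimed nothing is lost: excise the slab $\{\abs{\xi_3}\le\epsilon\}$ containing the degenerate equator and bound its contribution simply by its measure, $O(\epsilon)$, using no oscillation at all; on $\{\abs{\xi_3}>\epsilon\}$ apply three-dimensional stationary phase, where the explicit computation $\abs{\det\mathrm{Hess}\,(\abs{\xi_h}/\abs{\xi})}^{-1/2}=\abs{\xi_3}^{-2}\abs{\xi_h}^{1/2}\abs{\xi}^{9/2}\lesssim\epsilon^{-2}$ quantifies the degeneration and yields a bound $O(t^{-3/2}\epsilon^{-2})$ uniform in $x$; optimizing $\epsilon=t^{-1/2}$ gives $t^{-1/2}$. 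I would replace your equatorial Fresnel analysis by this measure-versus-stationary-phase optimization (and, if you wish to avoid the singular higher derivatives of the symbol, treat a small neighborhood of the axis the same way); your Littlewood--Paley reduction can then stand as written.
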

\begin{proof}
 By scaling it suffices to prove the estimate $\abs{e^{L_\pm t}\varphi}_{L^\infty}\lesssim t^{-\frac{1}{2}}$ for a smooth bump function $\varphi\in C^\infty_c(\R^3)$ with Fourier transform supported in an annulus around $\abs{\xi}\sim 1$ (away from the origin -- see \cite[Proof of Proposition 2.1]{SQGpaper} for more details on this reduction). Hence we bound the integral
 \begin{equation}\label{eq:disp_int}
  \int_{\R^3}e^{ix\cdot\xi\pm it\frac{\abs{\xi_h}}{\abs{\xi}}}\varphi(\xi)\,d\xi.
 \end{equation}
 
 With stationary phase techniques in mind we smoothly split the area of integration into two pieces, according to the size of ${\xi_3}$.  To this end let $\epsilon>0$, to be chosen later. 
 
 For $\abs{\xi_3}\leq\epsilon$ we note that
 \begin{equation*}
  \int_{\abs{\xi_3}\leq\epsilon}e^{ix\cdot\xi\pm it\frac{\abs{\xi_h}}{\abs{\xi}}}\varphi(\xi)\,d\xi\lesssim \abs{\varphi}_{L^\infty} \epsilon,
 \end{equation*}
 since $\abs{\{\abs{\xi_3}<\epsilon\}}\lesssim\epsilon$.
 
  If $\abs{\xi_3}>\epsilon$ we apply the method of stationary phase. We note that 
  $$\nabla\frac{\abs{\xi_h}}{\abs{\xi}}=\frac{\xi_3}{\abs{\xi}^3}\left(\frac{\xi_1\xi_3}{\abs{\xi_h}},\frac{\xi_2\xi_3}{\abs{\xi_h}},-\abs{\xi_h}\right),$$
  so for any $(t,x)\in\R\times\R^3$ there at most finitely many stationary points of the exponent in \eqref{eq:disp_int}. Furthermore we compute that $\abs{\det Hess\frac{\abs{\xi_h}}{\abs{\xi}}}^{-\frac{1}{2}}= \abs{\xi_3}^{-2}\abs{\xi_h}^{\frac{1}{2}}\abs{\xi}^{\frac{9}{2}}$, so the standard stationary phase lemma gives a bound of
 \begin{equation*}
  \int_{\abs{\xi_3}>\epsilon}e^{ix\cdot\xi\pm it\frac{\abs{\xi_h}}{\abs{\xi}}}\varphi(\xi)\,d\xi\lesssim \abs{\varphi}_{L^\infty} t^{-\frac{3}{2}}\epsilon^{-2}.
 \end{equation*}
 
 In total we thus have the bound $\epsilon+t^{-\frac{3}{2}}\epsilon^{-2}$, so that the choice of $\epsilon=t^{-\frac{1}{2}}$ gives the claim.
\end{proof}

\begin{remark}\label{rem:disp_param}
 The systems \eqref{eq:BQ_new_lin} and \eqref{eq:vortlin} include the additional parameter $\sigma>0$, which governs the strength of dispersion: if $f(t)$ solves $\partial_t f=\sigma L_{\pm}f$, then $\abs{f(t)}_{L^\infty}\leq C (\sigma t)^{-\frac{1}{2}}\abs{f}_{\dot{B}^3_{1,1}}$.
\end{remark}

\section{A Reformulation of the Equations}\label{sec:reform}
In this section we rewrite the Boussinesq system \eqref{eq:Bouss_vort} using the above analysis to set up the problem of studying the behavior for strong dispersion

Now we project the full, nonlinear Boussinesq system onto the eigenmodes of the corresponding linear system. The steps and notation in both the velocity and vorticity formulations (\eqref{eq:BQ_new_lin} and \eqref{eq:BQ_new2}, or \eqref{eq:vortlin} and \eqref{eq:Bouss_vort}, respectively) are completely analogous -- we give the relevant details here in the vorticity formulation, since we will use it later to compute the limiting dynamics in Section \ref{sec:self_int}. The results are summarized in Corollary \ref{cor:newform}.

\begin{notation}
We begin by introducing the ``bar'' notation to denote the first three components of the projections onto the eigenmodes of the linear systems \eqref{eq:BQ_new_lin} and \eqref{eq:vortlin}, so that we have $\bar{\PP}^u_0, \bar{\PP}^u_\pm, \bar{\PP}^\omega_0, \bar{\PP}^\omega_\pm:\R^4\to\R^3$. In particular then we have $S(t)=\bar{\PP}^u_0(u,T)(t)$ and $s(t)=\bar{\PP}^\omega_0(\omega,T)(t)$, since the $T$ component of the stationary mode vanishes (as we saw in Lemma \ref{lem:eigen}).
\end{notation}

\textbf{Stationary Modes.} We apply the projection $\PP^\omega_0$ to \eqref{eq:Bouss_vort}. We recall that the stationary mode $\PP^\omega_0\omega$ does not involve $T$, so the only nonlinearity present there is 
$$P^\omega(\omega,\omega):=\bar{\PP}^\omega_0(N^\omega(\omega,\omega))=\bar{\PP}^\omega_0\left(N^\omega(\bar{\PP}^\omega_0\omega+\bar{\PP}^\omega_+\omega+\bar{\PP}^\omega_-\omega,\bar{\PP}^\omega_0\omega+\bar{\PP}^\omega_+\omega+\bar{\PP}^\omega_-\omega)\right),$$
which (by bilinearity of $N$) can be split into nine pieces, in accordance with the types of interactions of stationary and dispersive modes. To simplify notation we use the shorthands $\ast$, $+$ or $-$ to denote an input of the stationary or dispersive modes, respectively. Hence $P^\omega(\ast,\ast)=\bar{\PP}^\omega_0\left(N^\omega(\bar{\PP}^\omega_0\omega,\bar{\PP}^\omega_0\omega)\right)$, $P^\omega(\ast,+)=\bar{\PP}^\omega_0\left(N^\omega(\bar{\PP}^\omega_0\omega,\bar{\PP}^\omega_+\omega)\right)$ etc. One can then write $P^\omega(\omega,\omega)=\sum_{j,k=\ast,+,-}P^\omega(j,k)$.

\textbf{Dispersive Modes.} Similarly we split the nonlinearities projected onto the dispersive modes as
\begin{equation}
\begin{aligned}
 &Q^\omega_+\left((\omega,T),(\omega,T)\right):=\PP^\omega_+\left((N^\omega(\omega,\omega),M^\omega(\omega,T))\right),\\
 &Q^\omega_-\left((\omega,T),(\omega,T)\right):=\PP^\omega_-\left((N^\omega(\omega,\omega),M^\omega(\omega,T))\right),
\end{aligned}
\end{equation}
and write $Q^\omega_+(\ast,\pm)=Q^\omega_+\left(\PP^\omega_0(\omega,T),\PP^\omega_\pm(\omega,T)\right)$ etc.

\begin{cor}\label{cor:newform}
 The Boussinesq system in velocity form \eqref{eq:BQ_new} can be rewritten as
 \begin{equation}\label{eq:BQ_abs}
 \begin{cases}
 \partial_t S(t)&=\sum_{j,k=\ast,+,-}P^u(j,k),\\
 \partial_t D_\pm(t)+\sigma L_\pm D_\pm(t)&=\sum_{j,k=\ast,+,-}Q^u_\pm(j,k),
\end{cases}
\end{equation}
with initial data $(u,T)(0)=(u_0,T_0)$ satisfying $\div \,u_0=0$, and in vorticity form \eqref{eq:Bouss_vort} as
\begin{equation}\label{eq:Bouss_new}
 \begin{cases}
 \partial_t s(t)&=\sum_{j,k=\ast,+,-}P^\omega(j,k),\\
 \partial_t d_\pm(t)+\sigma L_\pm d_\pm(t)&=\sum_{j,k=\ast,+,-}Q^\omega_\pm(j,k),
\end{cases}
\end{equation}
with the initial data $(\omega,T)(0)=(\omega_0,T_0)$ satisfying $\div \,\omega_0=0$. (We recall that the second equations in both these systems stand for two separate equations with either the choice of $+$ or $-$ indices.)
\end{cor}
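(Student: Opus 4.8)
The plan is to read \eqref{eq:BQ_abs} and \eqref{eq:Bouss_new} as the images of the split forms \eqref{eq:BQ_new2} and \eqref{eq:Bouss_vort} under the three spectral projections supplied by Lemma \ref{lem:eigen}. The essential structural facts are already contained in that lemma: the projections $\PP^u_0,\PP^u_\pm$ (respectively $\PP^\omega_0,\PP^\omega_\pm$) are Fourier multipliers that do not depend on $t$, so they commute with $\partial_t$, and they diagonalize the linear operator of \eqref{eq:BQ_new_lin} (respectively \eqref{eq:vortlin}) so that it vanishes identically on the stationary eigenspace and reduces to the scalar dispersive multiplier $\sigma L_\pm$ on each mode $E^u_\pm$ (respectively $E^\omega_\pm$).

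Concretely, I would isolate in \eqref{eq:BQ_new2} the linear operator $\sigma B$ of \eqref{eq:BQ_new_lin} (which contains the linear pressure $\nabla p^L$ together with the couplings $\sigma T\vec{e}_3$ and $-\sigma u_3$) from the bilinear remainder $\mathcal N=(N^u(u,u),M^u(u,T))$, the nonlinear pressure $\nabla p^{NL}$ being folded into $N^u$. Applying $\PP^u_0$ and using that $B$ annihilates $E^u_0$ leaves no linear term and yields the equation for $\partial_t S$; applying $\PP^u_\pm$ and using that $B$ acts as $L_\pm$ on $E^u_\pm$ (Lemma \ref{lem:eigen}) yields $\partial_t D_\pm+\sigma L_\pm D_\pm=(\text{projected nonlinearity})$. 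It then remains to insert the decomposition $(u,T)=(S,0)+D_++D_-$ from Corollary \ref{cor:eigen} into $\mathcal N$ and expand by bilinearity into the nine inputs $(j,k)\in\{\ast,+,-\}^2$; this is precisely the grouping that \emph{defines} the interaction terms $P^u(j,k)$ and $Q^u_\pm(j,k)$, so the sums appearing in \eqref{eq:BQ_abs} are obtained by construction. The vorticity identity \eqref{eq:Bouss_new} follows verbatim from the same procedure applied to \eqref{eq:Bouss_vort}, using the second set of eigendata in Lemma \ref{lem:eigen} and the bilinear forms $N^\omega,M^\omega$.

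For the initial data I would note that the hypothesis $\div\,u_0=0$ (respectively $\div\,\omega_0=0$) places $(u_0,T_0)$ in the three-dimensional constrained solution space on which the projections act, so that the initial values $(S,D_+,D_-)(0)$ (respectively $(s,d_+,d_-)(0)$) are simply its images under $\PP_0,\PP_\pm$. In the vorticity formulation one moreover invokes the observation recorded beneath \eqref{eq:Bouss_vort} that $\partial_t\div\,\omega=0$, so the constraint propagates in time and genuinely reduces to the stated condition on $\omega_0$.

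The one point demanding care -- and the step I expect to be the main obstacle -- is verifying that the projections truly commute with the \emph{full} linear operator, and in particular that the linear pressure $\nabla p^L$ is diagonalized consistently with the $\sigma T\vec{e}_3$ and $-\sigma u_3$ couplings; this is exactly the computation behind Lemma \ref{lem:eigen}, whose eigenvectors are taken within the divergence-free subspace. A minor attendant subtlety is that the projections are defined only for $(\xi_1,\xi_2)\neq(0,0)$, so a priori the identities hold away from the measure-zero set $\{\xi_h=0\}$; since everything is formulated in $L^2$-based spaces this exceptional frequency set is negligible and the equations hold as stated.
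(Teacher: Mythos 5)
Your proposal is correct and takes essentially the same route as the paper, which treats this corollary as an immediate consequence of Lemma \ref{lem:eigen}, Corollary \ref{cor:eigen} and the definitions of $P^u,P^\omega,Q^u_\pm,Q^\omega_\pm$ given just above it (no separate proof is written out). The only bookkeeping point worth noting is the sign: with the paper's labeling $E^u_\pm$ carries the eigenvalue $\mp i\abs{\xi_h}/\abs{\xi}$, so the linear operator acts as $-L_\pm$ (rather than $L_\pm$) on $E^u_\pm$, which is exactly what produces the term $+\sigma L_\pm D_\pm$ on the left-hand side of \eqref{eq:BQ_abs}.
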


\begin{remark}
 Note that in this formulation the conditions that $u$ and $\omega$ be divergence free are automatically built in, since we project onto divergence-free vector fields, by Lemma \ref{lem:eigen}.
\end{remark}

\section{Limit of Strong Dispersion}\label{sec:disp_lim}
This section describes the effect of strong dispersion in \eqref{eq:BQ_new}. More precisely, we work on a \emph{fixed time interval}\footnote{Without loss of generality we assume this to be $[0,1]$.} for which one has \emph{uniform} energy estimates in the dispersion parameter $\sigma>0$, as is the setting of Theorem \ref{thm:main}, and study the dynamics as $\sigma\to\infty$. Intuitively one expects the system to decouple into a purely dispersive part (that tends to zero in a dispersive norm) and a limiting ``stationary'' part. Later on, in Section \ref{sec:self_int} we will identify this limiting system as stratified 2d Euler equations.

Using the analysis from Section \ref{sec:disp_eff} we can prove:
\begin{prop}\label{prop:u_conv_abs}
 Consider the equations \eqref{eq:BQ_abs} for the Boussinesq system,
 \begin{equation}\label{eq:BQ_abs2}
 \begin{cases}
 \partial_t S^\sigma(t)&=\sum_{j,k=\ast,+,-}P^u(j,k),\\
 \partial_t D^\sigma_\pm(t)+\sigma L_\pm D^\sigma_\pm(t)&=\sum_{j,k=\ast,+,-}Q^u_\pm(j,k),\\
 (u^\sigma,T^\sigma)(0)&=(u_0,T_0),\\
 \div \,u_0&=0,
\end{cases}
\end{equation}
and assume that for some $M>0$ we have the estimates
\begin{equation*}
 \abs{S^\sigma(t)}_{H^{6}},\abs{D^\sigma_\pm(t)}_{H^{6}}\leq M \text{ for } t\in [0,1],
\end{equation*}
as well as $\abs{D^\sigma_\pm(0)}_{\dot{B}^{3}_{1,1}},\abs{D^\sigma_\pm(0)}_{\dot{B}^{4}_{1,1}}\leq M$.

Then as $\sigma\to\infty$, for any $t\in(0,1]$ we have $D^\sigma_\pm(t)\to 0$ in $W^{1,\infty}$ and $S^\sigma(t)\to S^\infty(t)$ in $L^2$, where $S^\infty$ solves
\begin{equation}\label{eq:u_limit}
 \begin{cases}
  \partial_t S^\infty(t)&=P^u(S^\infty,S^\infty),\\
  S^\infty(0)&=\bar{\PP}^u_0(u_0,T_0).
 \end{cases}
\end{equation}
\end{prop}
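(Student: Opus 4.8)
The plan is to decouple the two convergences, handling the dispersive modes $D^\sigma_\pm$ first through Duhamel's formula together with the decay estimate of Proposition~\ref{prop:disp}, and then feeding the resulting decay into an $L^2$ energy estimate for the stationary mode $S^\sigma$. For the dispersive modes I would write the solution of the second equation in \eqref{eq:BQ_abs2} as
\begin{equation*}
 D^\sigma_\pm(t)=e^{-\sigma t L_\pm}D^\sigma_\pm(0)+\int_0^t e^{-\sigma(t-s)L_\pm}\Big(\sum_{j,k}Q^u_\pm(j,k)\Big)(s)\,ds.
\end{equation*}
Since $\nabla$ commutes with $e^{-\sigma t L_\pm}$, the $\sigma$-dependent decay of Remark~\ref{rem:disp_param} bounds the linear term by $\abs{e^{-\sigma tL_\pm}D^\sigma_\pm(0)}_{W^{1,\infty}}\lesssim(\sigma t)^{-\frac12}\big(\abs{D^\sigma_\pm(0)}_{\dot{B}^3_{1,1}}+\abs{D^\sigma_\pm(0)}_{\dot{B}^4_{1,1}}\big)\lesssim (\sigma t)^{-\frac12}M$, which vanishes as $\sigma\to\infty$ for each fixed $t>0$; this $(\sigma t)^{-1/2}$ factor is exactly the $t\lesssim\sigma^{-1}$ boundary layer. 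For the Duhamel term I would apply the same estimate under the integral, obtaining a bound $\lesssim\int_0^t(\sigma(t-s))^{-\frac12}\big|\sum_{j,k}Q^u_\pm(j,k)(s)\big|_{\dot{B}^4_{1,1}}\,ds$, and the decisive point is that this Besov norm is bounded uniformly in $\sigma$ and $t\in[0,1]$: the $Q^u_\pm(j,k)$ are quadratic in the mode components, each bounded by $M$ in $H^6$, and the first-order nonlinearity gives $H^6\!\cdot\! H^6\hookrightarrow W^{5,1}\hookrightarrow\dot{B}^4_{1,1}$, so $\big|Q^u_\pm(j,k)\big|_{\dot{B}^4_{1,1}}\lesssim M^2$. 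As $\int_0^t(\sigma(t-s))^{-\frac12}\,ds\lesssim\sigma^{-\frac12}$, the Duhamel contribution is $\lesssim\sigma^{-\frac12}M^2$, vanishing uniformly on $[0,1]$; together these give $D^\sigma_\pm(t)\to0$ in $W^{1,\infty}$ for every $t\in(0,1]$.

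For the stationary mode I would set $R^\sigma:=S^\sigma-S^\infty$ and subtract \eqref{eq:u_limit} from the first equation of \eqref{eq:BQ_abs2}, isolating the self-interaction:
\begin{equation*}
 \partial_tR^\sigma=P^u(R^\sigma,S^\sigma)+P^u(S^\infty,R^\sigma)+\mathcal E^\sigma,\qquad \mathcal E^\sigma:=\sum_{(j,k)\neq(\ast,\ast)}P^u(j,k),
\end{equation*}
where $\mathcal E^\sigma$ collects exactly those terms in which at least one dispersive mode appears. Testing against $R^\sigma$ in $L^2$ and using the transport structure of $P^u$ and the fact that the stationary modes are divergence free (so the nonlinear pairing is controlled by $\abs{\nabla S^\sigma}_{L^\infty}\lesssim\abs{S^\sigma}_{H^6}\lesssim M$ via $H^6\hookrightarrow W^{1,\infty}$), I expect the differential inequality $\tfrac12\partial_t\abs{R^\sigma}_{L^2}^2\leq CM\abs{R^\sigma}_{L^2}^2+\abs{\mathcal E^\sigma}_{L^2}\abs{R^\sigma}_{L^2}$. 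Since $R^\sigma(0)=\bar{\PP}^u_0(u_0,T_0)-\bar{\PP}^u_0(u_0,T_0)=0$, Grönwall's inequality yields $\abs{R^\sigma(t)}_{L^2}\lesssim e^{CMt}\int_0^t\abs{\mathcal E^\sigma(s)}_{L^2}\,ds$, so it remains to show $\int_0^1\abs{\mathcal E^\sigma(s)}_{L^2}\,ds\to0$. Each term of $\mathcal E^\sigma$ carries a dispersive factor, and placing the $L^\infty$ control on that factor (or its gradient) gives $\abs{\mathcal E^\sigma(s)}_{L^2}\lesssim M\,\sum_\pm\abs{D^\sigma_\pm(s)}_{W^{1,\infty}}\to0$ pointwise for $s>0$ by the first part, while the $H^6$ bounds give the uniform envelope $\abs{\mathcal E^\sigma(s)}_{L^2}\lesssim M^2$; dominated convergence then closes the argument and yields $S^\sigma(t)\to S^\infty(t)$ in $L^2$. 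Existence of $S^\infty$ on $[0,1]$ with an $H^6$ bound follows from standard well-posedness for the quadratic limiting system, later identified as stratified 2d Euler in Section~\ref{sec:self_int}.

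The main obstacle I anticipate lies in the first part, namely the uniform-in-$\sigma$ control of the nonlinear terms in the dispersion-adapted norm $\dot{B}^4_{1,1}$. The delicate point is not the product estimate but the boundedness of the eigenprojections $\PP^u_\pm$ on $\dot{B}^4_{1,1}$: as a skew-adjoint system the linear evolution is unitary, so $\PP^u_\pm$ are orthogonal and hence harmless on $L^2$ (which is all that part two needs), but their symbols involve ratios such as $\abs{\xi_h}/\abs{\xi}$ and $\xi_i\xi_j/\abs{\xi_h}^2$ that are only homogeneous of degree zero and fail to be smooth along the axis $\xi_h=0$, so Mikhlin-type boundedness on the $L^1$-based space $\dot{B}^4_{1,1}$ is genuinely in question. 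Resolving this requires exploiting either that the singular set $\{\xi_h=0\}$ has codimension two and the $H^6$ regularity leaves ample room to absorb a small derivative loss, or that the inputs already lie in the ranges of these projections; once this multiplier bound is secured, the remaining steps are the routine Duhamel and energy estimates sketched above.
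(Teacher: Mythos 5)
Your proposal follows essentially the same route as the paper: Duhamel's formula combined with the $(\sigma t)^{-1/2}$ dispersive decay and a $W^{4,1}$-type control of the quadratic nonlinearity to kill $D^\sigma_\pm$ in $W^{1,\infty}$, followed by an $L^2$ energy estimate for $S^\sigma-S^\infty$ that exploits the divergence-free cancellation in the $(\ast,\ast)$ self-interaction and puts the $W^{1,\infty}$-decaying dispersive factors on the remaining terms before applying Gr\"onwall. The multiplier issue you flag at the end -- boundedness of $\PP^u_\pm$ on the $L^1$-based Besov norm despite the symbol's singularity along $\xi_h=0$ -- is a legitimate subtlety, but the paper's own proof passes over it silently in the step $\abs{Q^u_\pm(j,k)}_{\dot{B}^3_{1,1}}\lesssim\abs{Q^u_\pm(j,k)}_{W^{4,1}}$, so it is not a gap you have introduced relative to the published argument.
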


\begin{remark}
 Note that the evolution of the limiting system \eqref{eq:u_limit} is given by the nonlinear self-interaction of the stationary mode with itself, projected again onto the stationary mode.
\end{remark}

\begin{remark}[Rate of Dispersive Decay]
 The proof below shows that -- as long as there is some decay -- the exact rate of decay in the dispersive estimate \eqref{eq:disp} is not crucial for this result. Similarly, the general idea of this decoupling in the limit of strong dispersion applies in a wider context of interacting oscillatory and dispersive systems.
\end{remark}

The proof combines the dispersive decay with the Duhamel formula.
\begin{proof}
 We first prove the decay of $D^\sigma_\pm$ and then use it to deduce the convergence to the limiting system \eqref{eq:u_limit}. To illustrate that this result does not depend on the precise rate of dispersive decay of the semigroup $e^{\sigma L_\pm t}$ we write $\alpha$ for the exponent $\frac{1}{2}$ in the decay estimate \eqref{eq:disp} and notice that the proof works for any $\alpha>0$.

 \paragraph{Decay of the Dispersive Modes} Applying the dispersive decay estimate \eqref{eq:disp} in Duhamel's formula gives for any $0<t\leq 1$
 \begin{equation}\label{eq:DHdecay}
 \begin{aligned}
  \abs{D^\sigma_\pm(t)}_{L^\infty}&\leq\abs{e^{\sigma tL_\pm}D^\sigma_\pm(0)}_{L^\infty}+\sum_{j,k=\ast,+,-}\int_0^t\abs{e^{\sigma(t-s)L_\pm}Q^u_\pm(j,k)}_{L^\infty}\,ds\\
  &\lesssim (t\sigma)^{-\alpha}\abs{D^\sigma_\pm(0)}_{\dot{B}^3_{1,1}}+\sigma^{-\alpha}\int_0^t (t-s)^{-\alpha} \abs{Q^u_\pm(j,k)}_{\dot{B}^3_{1,1}}\,ds\\
  &\lesssim (t\sigma)^{-\alpha}\abs{D^\sigma_\pm(0)}_{\dot{B}^3_{1,1}}+\sigma^{-\alpha}\int_0^t (t-s)^{-\alpha} \left(\abs{S^\sigma(s)}_{H^5}^2+\abs{D^\sigma_\pm(s)}_{H^5}^2\right)\,ds\\
  &\lesssim (t\sigma)^{-\alpha}\abs{D^\sigma_\pm(0)}_{\dot{B}^3_{1,1}}+\sigma^{-\alpha}M^2\int_0^t(t-s)^{-\alpha}\,ds\\
  &\lesssim \sigma^{-\alpha}\to 0\quad\text{as }\sigma\to\infty.
 \end{aligned}
 \end{equation}
Here we have used the fact\footnote{In fact, for any $\nu>0$ one has the embedding $W^{3+\nu,1}\hookrightarrow\dot{B}^3_{1,1}$.} that $\abs{Q^u_\pm(j,k)}_{\dot{B}^3_{1,1}}\lesssim\abs{Q^u_\pm(j,k)}_{W^{4,1}}\leq\abs{D^\sigma_\pm}_{H^{5}}^2+\abs{S^\sigma}_{H^{5}}^2$ for $j,k=\ast,+,-$.

The same argument works for a derivative of $D^\sigma_\pm$, so that this finishes the proof of the decay in $W^{1,\infty}$ of $D^\sigma_\pm(t)$. 

 \paragraph{Convergence of the Stationary Modes} The initial data of $S^\infty$ and $S^\sigma$ agree and we can write the equation for their difference $S^\sigma-S^\infty$ as
\begin{equation}\label{eq:stat_DH}
 \partial_t\left(S^\sigma(t)-S^\infty(t)\right)=\left(P^u(\ast,\ast)-P^u(S^\infty,S^\infty)\right) + \sum_{j,k=\ast,+,-, \;(j,k)\neq(\ast,\ast)} P^u(j,k).
\end{equation}
Here we have grouped the terms with two stationary inputs first. Hence every term in the second sum on the right contains at least one dispersive term, which we can estimate in $L^\infty$ when estimating the whole expression in $L^2$. More precisely, we note that for such a term with $(j,k)\neq(\ast,\ast)$
$$ \abs{P^u(j,k)}_{L^2}\leq\abs{D^\sigma_\pm(t)}_{W^{1,\infty}}\left(\abs{S^\sigma(t)}_{H^1}+\abs{D^\sigma_\pm(t)}_{H^1}\right)\lesssim M\sigma^{-\alpha}$$
by our previous estimate \eqref{eq:DHdecay}.

As for the first term, the divergence structure of the Euler nonlinearity $u\cdot\nabla u$ allows us to estimate this in $L^2$ without losing a derivative:
\begin{equation}\label{eq:stat_conv}
\begin{aligned}
 &\ip{P^u(\ast,\ast)-P^u(S^\infty,S^\infty),S^\sigma-S^\infty}_{L^2}\\
 &\hspace{2cm}=\ip{\bar{\PP}^u_0 N^u(S^\sigma,S^\sigma)-\bar{\PP}^u_0 N^u(S^\infty,S^\infty),S^\sigma-S^\infty}_{L^2}\\
 &\hspace{2cm} =\ip{\bar{\PP}^u_0\left(N^u(S^\sigma,S^\sigma)-N^u(S^\infty,S^\infty)\right),S^\sigma-S^\infty}_{L^2}\\
 &\hspace{2cm} =\ip{N^u(S^\sigma,S^\sigma)-N^u(S^\infty,S^\infty),S^\sigma-S^\infty}_{L^2}\\
 &\hspace{2cm} \leq\abs{\nabla S^\sigma(t)}_{L^\infty}\abs{S^\sigma(t)-S^\infty(t)}_{L^2}^2,
\end{aligned} 
\end{equation}
where the last inequality follows from the observation that
\begin{equation}\label{eq:cancel1}
\begin{aligned}
 \ip{N^u(S^\sigma,S^\sigma)-N^u(S^\infty,S^\infty),S^\sigma-S^\infty}_{L^2}&=\ip{S^\sigma\cdot\nabla S^\sigma-S^\infty\cdot\nabla S^\infty,S^\sigma-S^\infty}_{L^2} \\
 &\qquad+\ip{\nabla p^{NL}-\nabla p^{NL,\infty},S^\sigma-S^\infty}_{L^2}\\
 &\hspace{-5cm}=\ip{S^\sigma\cdot\nabla S^\sigma-S^\infty\cdot\nabla S^\infty,S^\sigma-S^\infty}_{L^2}\\
 &\hspace{-5cm}=\ip{(S^\sigma-S^\infty)\cdot\nabla S^\sigma+S^\infty\cdot\nabla(S^\sigma-S^\infty),S^\sigma-S^\infty}_{L^2}\\
 &\hspace{-5cm}=\ip{(S^\sigma-S^\infty)\cdot\nabla S^\sigma,S^\sigma-S^\infty}_{L^2}\\
 &\hspace{-5cm}\leq \abs{\nabla S^\sigma}_{L^\infty} \abs{S^\sigma-S^\infty}_{L^2}^2.
\end{aligned}
\end{equation}
This holds since by construction $S^\sigma$ and $S^\infty$ are divergence free, hence the pressure terms vanish and the last equality in \eqref{eq:cancel1} holds.

Hence we deduce using \eqref{eq:stat_DH} that
\begin{equation*}
\begin{aligned}
 &\partial_t\abs{S^\sigma(t)-S^\infty(t)}^2_{L^2}=2\ip{\partial_t(S^\sigma(t)-S^\infty(t)),S^\sigma(t)-S^\infty(t)}\\
 &\lesssim M\sigma^{-\alpha}\abs{S^\sigma(t)-S^\infty(t)}_{L^2}+\abs{\nabla S^\sigma(t)}_{L^\infty} \abs{S^\sigma(t)-S^\infty(t)}^2_{L^2},
\end{aligned}
\end{equation*}
from which Gr\"onwall's Lemma yields
\begin{equation*}
 \abs{S^\sigma(t)-S^\infty(t)}_{L^2}\lesssim tM\sigma^{-\alpha}e^{\int_0^t\abs{\nabla S^\sigma(\tau)}_{L^\infty}\;d\tau}\lesssim tM\sigma^{-\alpha}e^{M t},
\end{equation*}
since $S^\sigma(0)=S^\infty(0)$.
Clearly this tends to zero as $\sigma\to\infty$.
\end{proof}

We can now use this result to show that the corresponding convergence also holds in vorticity, i.e.\ for equation \eqref{eq:Bouss_vort}. This cannot be done purely at the level of the vorticity, but rather builds on the convergence in velocity, since the equivalent of the crucial inequality \eqref{eq:stat_conv} includes terms involving both the velocity and the vorticity.
\begin{cor}\label{cor:vort_conv_abs}
 In the setting of Proposition \ref{prop:u_conv_abs}, consider the equations \eqref{eq:Bouss_new} for the Boussinesq system in vorticity form,
 \begin{equation}\label{eq:Bouss_new2}
 \begin{cases}
 \partial_t s^\sigma(t)&=\sum_{j,k=\ast,+,-}P^\omega(j,k),\\
 \partial_t d^\sigma_\pm(t)+\sigma L_\pm d^\sigma_\pm(t)&=\sum_{j,k=\ast,+,-}Q^\omega_\pm(j,k),\\
 (\omega^\sigma,T^\sigma)(0)&=(\omega_0,T_0),\\
 \div \,\omega_0&=0,
\end{cases}
\end{equation}
and assume now that for some $M>0$
\begin{equation*}
 \abs{s^\sigma(t)}_{H^6},\abs{d^\sigma_\pm(t)}_{H^6}\leq M\text{ for } t\in [0,1],
\end{equation*}
and $\abs{d^\sigma_\pm(0)}_{\dot{B}^{3}_{1,1}},\abs{d^\sigma_\pm(0)}_{\dot{B}^{4}_{1,1}}\leq M$.

Then as $\sigma\to\infty$, for any $t\in(0,1]$ we have $d^\sigma_\pm(t)\to 0$ in $W^{1,\infty}$ and $s^\sigma(t)\to s^\infty(t)$ in $L^2$, where $s^\infty$ solves
\begin{equation}\label{eq:vort_limit}
 \begin{cases}
  \partial_t s^\infty(t)&=P^\omega(s^\infty,s^\infty),\\
  s^\infty(0)&=\bar{\PP}^\omega_0\omega_0.
 \end{cases}
\end{equation}
\end{cor}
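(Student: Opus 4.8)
The plan is to follow the two-step structure of the proof of Proposition~\ref{prop:u_conv_abs} --- first the dispersive decay of the modes $d^\sigma_\pm$, then the convergence of the stationary mode $s^\sigma$ --- but to feed the velocity-level conclusions of Proposition~\ref{prop:u_conv_abs} into the second step. For the decay I would run the Duhamel argument \eqref{eq:DHdecay} verbatim: from $d^\sigma_\pm(t)=e^{\sigma tL_\pm}d^\sigma_\pm(0)+\int_0^t e^{\sigma(t-s)L_\pm}\sum_{j,k}Q^\omega_\pm(j,k)\,ds$, applying \eqref{eq:disp} with the $\sigma$-scaling of Remark~\ref{rem:disp_param}, it suffices to control $\abs{Q^\omega_\pm(j,k)}_{\dot{B}^3_{1,1}}\lesssim\abs{Q^\omega_\pm(j,k)}_{W^{4,1}}\lesssim\abs{s^\sigma}_{H^6}^2+\abs{d^\sigma_\pm}_{H^6}^2$. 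Since $N^\omega(\omega,\omega)=\omega\cdot\nabla u-u\cdot\nabla\omega$ with $u=(-\Delta)^{-1}\curl\,\omega$ gaining one derivative, the product estimates close precisely with the $H^6$ (rather than $H^5$) and $\dot{B}^4_{1,1}$ hypotheses, giving $d^\sigma_\pm(t)\to0$ in $W^{1,\infty}$ for each $t\in(0,1]$. Working in the setting of Proposition~\ref{prop:u_conv_abs}, I may simultaneously use its output: $D^\sigma_\pm(t)\to0$ in $W^{1,\infty}$ and $S^\sigma(t)\to S^\infty(t)$ in $L^2$ with rate $\abs{S^\sigma-S^\infty}_{L^2}\lesssim tM\sigma^{-\alpha}e^{Mt}$, where $\alpha$ is the decay exponent from \eqref{eq:DHdecay}.

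For the stationary mode set $\delta s:=s^\sigma-s^\infty$ and $\delta S:=S^\sigma-S^\infty$, and run an $L^2$ energy estimate on $\partial_t\delta s=\big(P^\omega(\ast,\ast)-P^\omega(s^\infty,s^\infty)\big)+\sum_{(j,k)\neq(\ast,\ast)}P^\omega(j,k)$. Two structural facts make this viable. First, although the vorticity linear operator is not skew-adjoint (so $E^\omega_+$ and $E^\omega_-$ are not $L^2$-orthogonal), a Fourier computation shows that $E^\omega_0$ \emph{is} orthogonal to $E^\omega_+\oplus E^\omega_-$; hence $\PP^\omega_0$ is the orthogonal projection onto $E^\omega_0$, and since each $N^\omega(\cdot,\cdot)$ is a curl (hence divergence free) while $\delta s\in E^\omega_0$, I may replace $\ip{\bar{\PP}^\omega_0 N^\omega(\cdots),\delta s}$ by $\ip{N^\omega(\cdots),\delta s}$. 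Second, I would identify $s^\infty=\curl\,S^\infty$: using $N^\omega(\omega,\omega)=\curl\,N^u(u,u)$ and the intertwining $\curl\,\bar{\PP}^u_0=\bar{\PP}^\omega_0\,\curl$ (read off from Corollary~\ref{cor:eigen}), one checks that $\curl\,S^\infty$ solves \eqref{eq:vort_limit} with the correct data, so by uniqueness $s^\infty=\curl\,S^\infty$ and in particular $\delta S=(-\Delta)^{-1}\curl\,\delta s$.

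Expanding $N^\omega(s^\sigma,s^\sigma)-N^\omega(s^\infty,s^\infty)=N^\omega(\delta s,s^\sigma)+N^\omega(s^\infty,\delta s)$ and pairing with $\delta s$ yields four terms: the vortex-stretching term $\ip{\delta s\cdot\nabla S^\sigma,\delta s}\lesssim\abs{\nabla S^\sigma}_{L^\infty}\abs{\delta s}_{L^2}^2$; the term $\ip{s^\infty\cdot\nabla\delta S,\delta s}\lesssim\abs{s^\infty}_{L^\infty}\abs{\nabla\delta S}_{L^2}\abs{\delta s}_{L^2}\lesssim M\abs{\delta s}_{L^2}^2$ (using $\abs{\nabla\delta S}_{L^2}=\abs{\delta s}_{L^2}$ for divergence-free $\delta S$); the transport term $\ip{S^\infty\cdot\nabla\delta s,\delta s}$, which vanishes since $\div\,S^\infty=0$; and the term $\ip{\delta S\cdot\nabla s^\sigma,\delta s}$. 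This last term is the crux: it cannot be closed within the vorticity variables because $\delta S$ is not bounded by $\delta s$, and it is exactly here that the velocity result is indispensable --- I bound it by $\abs{\delta S}_{L^2}\abs{\nabla s^\sigma}_{L^\infty}\abs{\delta s}_{L^2}\lesssim M\abs{\delta S}_{L^2}\abs{\delta s}_{L^2}$ and invoke $\abs{\delta S}_{L^2}\to0$ from Proposition~\ref{prop:u_conv_abs}. The remainder $\sum_{(j,k)\neq(\ast,\ast)}P^\omega(j,k)$ carries in each summand at least one dispersive factor; placing that factor (or its gradient) in $L^\infty$ and the stationary factor in $L^2$ bounds its $L^2$-pairing with $\delta s$ by $R(\sigma,t)\abs{\delta s}_{L^2}$ with $R(\sigma,t)\lesssim M\sum_\pm(\abs{d^\sigma_\pm}_{W^{1,\infty}}+\abs{D^\sigma_\pm}_{W^{1,\infty}})$, which is integrable on $[0,1]$ and tends to $0$ --- note this again uses the velocity decay of $D^\sigma_\pm$. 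Collecting everything gives $\partial_t\abs{\delta s}_{L^2}\lesssim M\abs{\delta s}_{L^2}+M\abs{\delta S}_{L^2}+R(\sigma,t)$, and since $\delta s(0)=0$, Gr\"onwall yields $\abs{\delta s(t)}_{L^2}\lesssim e^{CMt}\int_0^t\big(M\abs{\delta S}_{L^2}+R(\sigma,\tau)\big)\,d\tau\to0$ as $\sigma\to\infty$ for each $t\in(0,1]$. The main obstacle is conceptual rather than computational: the vorticity energy method is not self-contained --- the non-coercive mixed term $\ip{\delta S\cdot\nabla s^\sigma,\delta s}$ and the mixed-mode remainder both force one to import the velocity convergence and the velocity dispersive decay of Proposition~\ref{prop:u_conv_abs}.
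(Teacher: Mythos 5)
Your proposal is correct and follows essentially the same route as the paper: the Duhamel/dispersive-decay argument for $d^\sigma_\pm$, then an $L^2$ energy estimate on $s^\sigma-s^\infty$ exploiting the transport--stretching structure of $N^\omega$, with the non-coercive term $\ip{(S^\sigma-S^\infty)\cdot\nabla s,\,s^\sigma-s^\infty}$ closed by importing the $L^2$ convergence of $S^\sigma\to S^\infty$ from Proposition \ref{prop:u_conv_abs}, exactly as in \eqref{eq:stat_conv_vort}--\eqref{eq:cancel2}. Your explicit verification of the orthogonality of $E^\omega_0$ to $E^\omega_\pm$ and of the intertwining $\curl\,\bar{\PP}^u_0=\bar{\PP}^\omega_0\,\curl$ (hence $s^\infty=\curl\,S^\infty$) supplies details the paper leaves implicit, and your slightly different bilinear splitting is equivalent to the paper's.
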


\begin{proof}
 Since $\omega=\curl \,u$ one may apply the curl in the corresponding calculations for the proof of Proposition \ref{prop:u_conv_abs} or work directly with the reformulation \eqref{eq:Bouss_new2}. In either case, essentially one ends up having to bound the analogue of \eqref{eq:stat_conv}, for which one calculates that
 \begin{equation}\label{eq:stat_conv_vort}
\begin{aligned}
 &\ip{P^\omega(\ast,\ast)-P^\omega(s^\infty,s^\infty),s^\sigma-s^\infty}_{L^2}=\ip{\bar{\PP}^\omega_0 N^\omega(s^\sigma,s^\sigma)-\bar{\PP}^\omega_0 N^\omega(s^\infty,s^\infty),s^\sigma-s^\infty}_{L^2}\\
 &\quad =\ip{\bar{\PP}^\omega_0\left(N^\omega(s^\sigma,s^\sigma)-N^\omega(s^\infty,s^\infty)\right),s^\sigma-s^\infty}_{L^2}\\
 &\quad =\ip{N^\omega(s^\sigma,s^\sigma)-N^\omega(s^\infty,s^\infty),s^\sigma-s^\infty}_{L^2}\\
 &\quad \leq\abs{\nabla s^\sigma(t)}_{L^\infty}\abs{s^\sigma(t)-s^\infty(t)}_{L^2}^2+\abs{\nabla s^\sigma(t)}_{L^\infty}\abs{S^\sigma(t)-S^\infty(t)}_{L^2}\abs{s^\sigma(t)-s^\infty(t)}_{L^2}.
\end{aligned} 
\end{equation}
This is due to the fact that
\begin{equation}\label{eq:cancel2}
\begin{aligned}
 &\ip{N^\omega(s^\sigma,s^\sigma)-N^\omega(s^\infty,s^\infty),s^\sigma-s^\infty}_{L^2}\\
 &\;=\ip{s^\sigma\cdot\nabla S^\sigma-S^\sigma\cdot\nabla s^\sigma-s^\infty\cdot\nabla S^\infty+S^\infty\cdot\nabla s^\infty,s^\sigma-s^\infty}_{L^2}\\
 &\;=\ip{(s^\sigma-s^\infty)\cdot\nabla S^\sigma,s^\sigma-s^\infty}_{L^2}+\ip{s^\infty\cdot\nabla(S^\sigma-S^\infty),s^\sigma-s^\infty}_{L^2}\\
 &\quad-\ip{S^\sigma\cdot\nabla(s^\sigma-s^\infty),s^\sigma-s^\infty}_{L^2}-\ip{(S^\sigma-S^\infty)\cdot\nabla s^\infty,s^\sigma-s^\infty}_{L^2}\\
 &\;=\ip{(s^\sigma-s^\infty)\cdot\nabla S^\sigma,s^\sigma-s^\infty}_{L^2}+\ip{s^\infty\cdot\nabla(S^\sigma-S^\infty),s^\sigma-s^\infty}_{L^2}\\
 &\quad-\ip{(S^\sigma-S^\infty)\cdot\nabla s^\infty,s^\sigma-s^\infty}_{L^2},\\
\end{aligned}
\end{equation}
and allows us to argue as in the proof of Proposition \ref{prop:u_conv_abs}, once we invoke the $L^2$ convergence of $S^\sigma\to S^\infty$ established therein.
\end{proof}

\section{Self-Interaction of the Stationary Mode}\label{sec:self_int}
As we saw in Section \ref{sec:disp_lim}, as the strength of the dispersion increases, the dynamics of the Boussinesq system decouple into a purely dispersive part (which decays) and a stationary part, governed by limiting systems \eqref{eq:u_limit} or \eqref{eq:vort_limit} in velocity or vorticity form, respectively. As remarked above, these limiting systems consist of the nonlinear interaction of the stationary mode with itself, whilst outputting again the stationary mode. In this section we show that these systems are in fact two-dimensional Euler equations. We present the relevant computations here in the vorticity formulation, since one can then avoid having to solve for the pressure.

For this we consider a velocity $S_\psi$ and its associated vorticity $s_\psi$ of the form $S_\psi=\left(\begin{array}{c}-\partial_2 \psi\\ \partial_1 \psi\\ 0\end{array}\right)$ for some $\psi(t,x)$ and compute the projection of $\curl\left(S_\psi\cdot\nabla S_\psi\right)$ onto $E^\omega_0$, i.e.\ we compute $P^\omega(s_\psi,s_\psi)$ as it appears in \eqref{eq:vort_limit}.
\begin{lemma}[Stationary Output]\label{lem:statout}
 The stationary output from the self-interaction of a stationary mode is  
 \begin{equation}\label{eq:limit_dyn1}
  P^\omega(s_\psi,s_\psi)=s_H
 \end{equation}
 with
 \begin{equation}\label{eq:limit_dyn2}
  H(t)=\Delta_h^{-1}\left[\partial_1\psi(t)\,\Delta_h\partial_2\psi(t)-\partial_2\psi(t)\,\Delta_h\partial_1\psi(t)\right].
 \end{equation}
\end{lemma}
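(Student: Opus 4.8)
The plan is to reduce the identity \eqref{eq:limit_dyn1} to matching a single scalar — the vertical component of the vorticity — and then to carry out a standard two–dimensional Euler computation for that scalar. Both sides of \eqref{eq:limit_dyn1} are by construction elements of the stationary eigenspace $E^\omega_0$: the right-hand side $s_H$ has the shape $(-\partial_1\partial_3 H,-\partial_2\partial_3 H,\Delta_h H)$ prescribed by Corollary \ref{cor:eigen}, while the left-hand side is the image of $\bar\PP^\omega_0$. A stationary vorticity mode $s_g=(-\partial_1\partial_3 g,-\partial_2\partial_3 g,\Delta_h g)$ is uniquely determined by its third component $\Delta_h g$ (invert $\Delta_h$ under the standing decay assumptions), so it suffices to show that the vertical components of the two sides agree.

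First I would note that for the self-interaction of the stationary mode the transport nonlinearity in the temperature equation drops out: since the stationary mode carries $T=0$, one has $M^\omega(s_\psi,0)=S_\psi\cdot\nabla\,0=0$, and the full nonlinearity to be projected is the $4$-vector $W:=(\curl(S_\psi\cdot\nabla S_\psi),0)$, whose vorticity part is a curl and hence divergence free, $\xi\cdot\widehat{\curl(\,\cdot\,)}=0$. By Lemma \ref{lem:eigen} the three eigenmodes span exactly the divergence-free $4$-vectors for $\xi_h\neq0$, so $W$ decomposes as $s_g+d_++d_-$. The conceptual crux — and the step I expect to be the main obstacle to phrase cleanly, since $\PP^\omega_0$ is a projection \emph{along} the other eigenspaces and not orthogonal — is that the dispersive vorticity modes $d_\pm$ in \eqref{eq:vort_eigen} have vanishing third component. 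Because the complementary modes carry no vertical vorticity, the projection onto $E^\omega_0$ leaves the vertical vorticity invariant, giving
\[
\left(P^\omega(s_\psi,s_\psi)\right)_3=\left(\bar\PP^\omega_0 W\right)_3=\left(\curl(S_\psi\cdot\nabla S_\psi)\right)_3.
\]

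It then remains to compute this scalar. Since $S_\psi=(-\partial_2\psi,\partial_1\psi,0)$ has no vertical component, the advection operator $S_\psi\cdot\nabla=-\partial_2\psi\,\partial_1+\partial_1\psi\,\partial_2$ is purely horizontal and $S_\psi\cdot\nabla S_\psi$ has vanishing third component, so its curl's third component is the two-dimensional scalar curl $\partial_1(S_\psi\cdot\nabla S_\psi)_2-\partial_2(S_\psi\cdot\nabla S_\psi)_1$. Expanding this, the quadratic terms in the second derivatives of $\psi$ collect into the factor $(\Delta_h\psi)\,\div_h S_\psi$, which vanishes because $S_\psi$ is horizontally divergence free, leaving the familiar two-dimensional Euler vorticity nonlinearity $S_\psi\cdot\nabla(\Delta_h\psi)=\partial_1\psi\,\Delta_h\partial_2\psi-\partial_2\psi\,\Delta_h\partial_1\psi$. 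By the definition \eqref{eq:limit_dyn2} of $H$ this equals $\Delta_h H$, which is precisely the third component of $s_H$; matching third components of two stationary modes then closes the argument.
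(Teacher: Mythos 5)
Your proof is correct and takes essentially the same route as the paper: identify $H$ by matching third (vertical) components, which the projection preserves because the dispersive vorticity modes $d_\pm$ of Corollary \ref{cor:eigen} have vanishing third component, and then compute $\left(\curl(S_\psi\cdot\nabla S_\psi)\right)_3=\partial_1\psi\,\Delta_h\partial_2\psi-\partial_2\psi\,\Delta_h\partial_1\psi$. The only differences are presentational — you spell out the justification for the component-matching step that the paper leaves implicit, and you organize the algebra via the identity $\curl(u\cdot\nabla u)=u\cdot\nabla\omega+\omega\,\div u$ instead of the paper's term-by-term expansion.
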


\begin{proof}
 To clarify the notation we recall that $$P^\omega(s_\psi,s_\psi)=\bar{\PP}^\omega_0 N^\omega(s_\psi,s_\psi)=\bar{\PP}^\omega_0\curl\left(S_\psi(t)\cdot \nabla S_\psi(t)\right).$$
 By construction we must have $P^\omega(s_\psi,s_\psi)=s_H$ for some $H$, so the key is to notice that by Corollary \ref{cor:eigen} we can identify $H(t)$ through the requirement 
 \begin{equation*}
  \Delta_h H(t)=\curl\left(S_\psi(t)\cdot \nabla S_\psi(t)\right)_3.
 \end{equation*}
 This we now compute explicitly.
 
 We have 
 \begin{equation*}
  \begin{aligned}
   \left(S_\psi\cdot\nabla S_\psi\right)_1&=\partial_2\psi\partial_1\partial_2\psi-\partial_1\psi\partial_2^2\psi,\\
   \left(S_\psi\cdot\nabla S_\psi\right)_2&=-\partial_2\psi\partial_1^2\psi+\partial_1\psi\partial_1\partial_2\psi,
  \end{aligned}
 \end{equation*}
 and hence
 \begin{equation*}
  \begin{aligned}
   \left[\curl\left(S_\psi(t)\cdot \nabla S_\psi(t)\right)\right]_3&=\partial_1\left(S_\psi(t)\cdot \nabla S_\psi(t)\right)_2-\partial_2\left(S_\psi(t)\cdot \nabla S_\psi(t)\right)_1\\
   &\hspace{-3cm} =-\partial_1\partial_2\psi\partial_1^2\psi-\partial_2\psi\partial_1^3\psi+\partial_1^2\psi\partial_1\partial_2\psi+\partial_1\psi\partial_1^2\partial_2\psi\\
   &\hspace{-3cm}\qquad -\partial_2^2\psi\partial_1\partial_2\psi-\partial_2\psi\partial_1\partial_2^2\psi+\partial_1\partial_2\psi\partial_2^2\psi+\partial_1\psi\partial_2^3\psi\\
   &\hspace{-3cm} =-\partial_1^3\psi\partial_2\psi+\partial_1^2\psi(-\partial_1\partial_2\psi+\partial_1\partial_2\psi)+\partial_1\psi[\partial_1^2\partial_2\psi+\partial_2^3\psi]\\
   &\hspace{-3cm}\qquad +\partial_2^2\psi[\partial_1^2\partial_2\psi+\partial_2^3\psi]+\partial_2^2\psi[\partial_1\partial_2\psi-\partial_1\partial_2\psi]-\partial_1\partial_2^2\psi\partial_2\psi\\
   &\hspace{-3cm} =-\partial_1^3\psi\partial_2\psi+\partial_1\psi[\Delta_h\partial_2\psi]-\partial_1\partial_2^2\psi\partial_2\psi\\
   &\hspace{-3cm} =\partial_1\psi[\Delta_h\partial_2\psi]-\partial_2\psi[\Delta_h\partial_1\psi].
  \end{aligned}
 \end{equation*}
 From the structure of the eigenspaces (as given in Corollary \ref{cor:eigen}) we deduce that this expression must equal $\Delta_h H$, i.e.\ we can express $H$ as
 \begin{equation*}
  H(t)=\Delta_h^{-1}\left[\partial_1\psi\Delta_h\partial_2\psi-\partial_2\psi\Delta_h\partial_1\psi\right],
 \end{equation*}
 as claimed.
\end{proof}

\begin{proof}[End of the Proof of Theorem \ref{thm:main}]
Using \eqref{eq:limit_dyn1} and \eqref{eq:limit_dyn2} we can rewrite the dynamics of the limiting equation \eqref{eq:vort_limit} in vorticity form as 
\begin{equation}
 \partial_t\psi=-\Delta_h^{-1}\left[\partial_1\psi\Delta_h\partial_2\psi-\partial_2\psi\Delta_h\partial_1\psi\right].
\end{equation}
We recognize here the 2d Euler equations in vorticity form: We have 
\begin{equation*}
\begin{aligned}
 \partial_t\Delta_h\psi&=-\left[\partial_1\psi\partial_2\Delta_h\psi-\partial_2\psi\partial_1\Delta_h\psi\right]
 &=-\nabla_h^\perp\psi\cdot\nabla_h\Delta_h\psi.
\end{aligned} 
\end{equation*}
Setting $\bar{\omega}:=\Delta_h\psi$ and $\bar{u}:=\nabla_h^\perp\psi=\left(\begin{array}{c}-\partial_2\psi\\ \partial_1\psi\end{array}\right)$ we can write this as
\begin{equation}
 \partial_t\bar{\omega}+\bar{u}\cdot\nabla_h\bar{\omega}=0,
\end{equation}
which is the vorticity formulation of \eqref{eq:2DEuler} in Theorem \ref{thm:main}. Notice that since $\psi$ depends on all three space variables (and not just the horizontal ones), this is really a stratified system of 2d Euler equations, i.e.\ for every $x_3\in\R$ we have an individual 2d Euler initial value problem in the horizontal variables.

This completes the proof of Theorem \ref{thm:main}.
\end{proof}

\begin{remark}
 For any finite $\sigma>0$, the self-interaction of the stationary mode also produces decaying modes, which can be computed directly by projecting onto the relevant subspaces. Since this is not needed here we leave the calculations to the interested reader.
\end{remark}

\begin{acknowledgements}
 The author would like to thank Tarek Elgindi and Pierre Germain for their helpful comments during many joint discussions.
\end{acknowledgements}

\newpage
\appendix
\bibliographystyle{plain}
\bibliography{3dboussinesq.bib}

\end{document}